\newtheorem{theorem}{Theorem}[section]
\newtheorem{proposition}[theorem]{Proposition}
\newtheorem{lemma}[theorem]{Lemma} 
\theoremstyle{remark}
\newtheorem{remark}[theorem]{Remark}
\newtheorem{example}[theorem]{Example}
\author{Myrto Kallipoliti}
\author{Henri M\"uhle}
\address{Fak. f\"ur Mathematik, Universit\"at Wien, Garnisongasse 3, 1090 Wien, Austria}
\email{myrto.kallipoliti@univie.ac.at}
\email{henri.muehle@univie.ac.at}
\thanks{This work was funded by the FWF research grant no. Z130-N13.}
\title{On the Topology of the Cambrian Semilattices}
\begin{document}

\begin{abstract}
	For an arbitrary Coxeter group $W$, David Speyer and Nathan Reading defined Cambrian semilattices 
	$C_{\gamma}$ as semilattice quotients of the weak order on $W$ induced by certain semilattice 
	homomorphisms. In this article, we define an edge-labeling using the realization of Cambrian 
	semilattices in terms of $\gamma$-sortable elements, and show that this is an EL-labeling for every
	closed interval of $C_{\gamma}$. In addition, we use our labeling to show that every finite open
	interval in a Cambrian semilattice is either contractible or spherical, and we characterize 
	the spherical intervals, generalizing a result by Nathan Reading.
\end{abstract}

\maketitle

\section{Introduction}
  \label{sec:introduction}
In \cite[Theorem~9.6]{bjorner97shellable} Anders Bj\"orner and Michelle Wachs showed that the Tamari 
lattice $T_{n}$, introduced in \cite{tamari62algebra}, can be regarded as the subposet of the weak-order
lattice on the symmetric group $\mathfrak{S}_{n}$, consisting of 312-avoiding permutations. More precisely,
there exists a lattice homomorphism $\sigma:\mathfrak{S}_{n}\to T_{n}$ such that $T_{n}$ is isomorphic
to the subposet of the weak-order lattice on $\mathfrak{S}_{n}$ consisting of the bottom elements in the
fibers of $\sigma$. In \cite{reading06cambrian}, the map $\sigma$ was realized as a map from
$\mathfrak{S}_{n}$ to the triangulations of an $(n+2)$-gon, where the partial order on the latter is given
by diagonal flips. It was shown that the fibers of $\sigma$ induce a congruence relation on the weak-order
lattice on $\mathfrak{S}_{n}$, and that the Tamari lattice is isomorphic to the lattice quotient induced by
this congruence. Moreover, it was observed that different embeddings of the $(n+2)$-gon in the plane yield
different lattice quotients of the weak-order lattice on $\mathfrak{S}_{n}$. The realization of
$\mathfrak{S}_{n}$ as the Coxeter group $A_{n-1}$ was then used to connect the embedding of the $(n+2)$-gon
in the plane with a Coxeter element of $A_{n-1}$. This connection eventually led to the definition of
Cambrian lattices, which can analogously be defined for an arbitrary finite Coxeter group $W$ as lattice
quotients of the weak-order lattice on $W$ with respect to certain lattice congruences induced by
orientations of the Coxeter diagram of $W$ (see \cite{reading07sortable}). 

As suggested in \cite{stasheff97from}*{Appendix~B}, and later in \cite{loday04realization}*{Theorem~1},
the Hasse diagram of the Tamari lattice corresponds to the $1$-skeleton of the classical associa\-hedron.
(Due to the connection to the symmetric group, which was elaborated in \cite{loday04realization}, the
classical associahedron is also referred to as \emph{type $A$-associahedron}.) In 
\cite{bott94on, simion03type, fomin03systems, chapoton02polytopal}, generalized associahedra 
were defined for all crystallographic Coxeter groups which generalize the type $A$-associahedron. The 
Cambrian lattices provide another viewpoint for the generalized associahedra, namely that the fan
associated to a Cambrian lattice of crystallographic type is the normal fan of the generalized
associahedron of the same type (see \cite{reading09cambrian} for the details of this construction).
Moreover, since the Cambrian lattices are defined for all finite Coxeter groups, this connection defines a
generalized associahedron for the non-crystallographic types as well (see
\cite{reading09cambrian}*{Corollary~8.1}). 

In \cite{reading11sortable}, Nathan Reading and David Speyer generalized the construction of Cambrian 
lattices to infinite Coxeter groups. Since in general, there exists no maximum element in an infinite 
Coxeter group, the weak order constitutes only a (meet)-semilattice. Using the realization of the Cambrian 
lattices in terms of Coxeter-sortable elements, which was first described in \cite{reading07sortable} and 
later extended in \cite{reading11sortable}, the analogous construction as in the finite case yields a 
quotient semilattice of the weak-order semilattice, the so-called \emph{Cambrian semilattice}.

This article is dedicated to the investigation of the topological properties of the order complex of 
the proper part of closed intervals in a Cambrian semilattice. One (order-theoretic) tool to 
investigate these properties is EL-shellability, which was introduced in \cite{bjorner80shellable}, and
further developed in \cite{bjorner83lexicographically,bjorner96shellable,bjorner97shellable}. The fact
that a poset is EL-shellable implies a number of properties of the associated order complex: this order
complex is Cohen-Macaulay, it is homotopy equivalent to a wedge of spheres and the dimensions of its 
homology groups can be computed from the labeling. The first main result of the present article is the
following.
\begin{theorem}
  \label{thm:infinite_shellability}
	Every closed interval in $C_{\gamma}$ is EL-shellable for every (possibly infinite) Coxeter group
	$W$ and every Coxeter element $\gamma\in W$.
\end{theorem}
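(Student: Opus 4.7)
The plan is to prove the theorem by exhibiting an explicit EL-labeling of $C_{\gamma}$ that arises from the realization of Cambrian semilattices as the subposet of $\gamma$-sortable elements in the weak order on $W$. After fixing a reduced word $s_{i_{1}}s_{i_{2}}\cdots s_{i_{n}}$ for $\gamma$, every $\gamma$-sortable element $w$ possesses a canonical reduced expression realized as a leftmost subword of the half-infinite word $\gamma^{\infty}=\gamma\vert\gamma\vert\gamma\vert\cdots$. The positions of $\gamma^{\infty}$ are indexed by the positive integers in the natural order, and I intend to use these positions as the label set.

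The first step is to check that every cover relation $u\lessdot v$ in $C_{\gamma}$ corresponds to adding exactly one letter to the canonical sortable expression of $u$ to obtain that of $v$. The position of the inserted letter in $\gamma^{\infty}$ is uniquely determined by the cover and will serve as the label $\lambda(u\lessdot v)$. This is a book-keeping exercise based on Reading and Speyer's description of covers between $\gamma$-sortable elements, together with the fact that closed intervals in the weak order on any Coxeter group are finite; consequently every closed interval in $C_{\gamma}$ is finite and receives finitely many labels, so positional comparison is unambiguous.

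Next, for a fixed closed interval $[x,y]$, I would construct the rising chain greedily: starting from $x$, at each step I append to the current sortable word the leftmost letter of $\gamma^{\infty}$ whose insertion keeps the resulting element both $\gamma$-sortable and weakly below $y$. To verify that this procedure produces a saturated chain from $x$ to $y$ with strictly increasing labels, I would invoke the stability of $\gamma$-sortability under meets, together with the observation that the positions of $\gamma^{\infty}$ used by $y$ already form a totally ordered set whose left-to-right reading recovers the canonical expression of $y$. Uniqueness of the rising chain is then immediate, because at every step the smallest admissible label is forced.

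The main obstacle will be establishing lexicographic minimality of this rising chain among all maximal chains in $[x,y]$. My approach is a position-by-position exchange argument: at the first index where some other maximal chain $c$ deviates from the rising chain, the greedy rule has already selected the smallest available label, so the corresponding label of $c$ must be strictly larger, and this strict inequality propagates to give lexicographic dominance. Making this precise requires a careful analysis of which position-sets of $\gamma^{\infty}$ correspond to $\gamma$-sortable elements of $[x,y]$, and in particular of how passing from the weak order to the quotient $C_{\gamma}$ can merge weak-order covers. I expect this exchange step, steered by Reading and Speyer's structural results on sortable elements, to be the technical heart of the proof, with the rest of the argument amounting to unwinding the definitions of the labeling and of the greedy construction.
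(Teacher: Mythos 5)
Your labeling essentially agrees with the paper's (the smallest position of $\gamma^\infty$ appearing in $v$ but not in $u$), but the proposal contains two genuine gaps.

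First, you assert as a ``book-keeping exercise'' that every cover relation in $C_\gamma$ corresponds to adding exactly one letter to the $\gamma$-sorting word. This is false: Cambrian semilattices are not graded by $\ell_S$, and covers can skip lengths. For instance, in the $A_3$ Cambrian lattice with $\gamma=s_1s_2s_3$, the element $s_1s_2\vert s_1$ covers $s_2$ even though $\alpha_\gamma(s_1s_2\vert s_1)\setminus\alpha_\gamma(s_2)=\{1,4\}$; the intermediate weak-order element $s_2\vert s_1$ is not $\gamma$-sortable. The labeling therefore has to be defined as the \emph{minimum} of the added positions, and the rest of your argument must cope with covers whose length jump is $\geq 2$.

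Second, the greedy construction and the exchange argument are asserted rather than proved, and the key technical content is missing. The claim that appending the smallest admissible letter of $\gamma^\infty$ produces a \emph{cover} of the current element lying in $[x,y]_\gamma$ is precisely the hard part: for the initial letter $s$ of $\gamma$ with $s\not\leq_\gamma u$ and $s\leq_\gamma v$, one must show that $s\vee_\gamma u$ covers $u$ in $C_\gamma$. This is Lemma~\ref{lem:joincov} in the paper, established using cover reflections and the projection $\pi_\downarrow^\gamma$; ``stability of $\gamma$-sortability under meets'' does not give it to you. The paper then proves existence, uniqueness, and lexicographic minimality of the rising chain by an induction on length and rank driven by the recursion for $\gamma$-sortable elements (Proposition~\ref{prop:recursion}), supported by Lemma~\ref{lem:labeling_recursion} (how the labeling transforms under that recursion) and Lemma~\ref{lem:i_0} (the minimum missing position appears on every maximal chain, and labels along a chain are distinct). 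Your position-by-position exchange argument would need the same structural inputs, and without an analogue of Lemma~\ref{lem:joincov} it cannot even certify that the greedy step is well-defined.
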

We prove this result uniformly using the realization of $C_{\gamma}$ in terms of Coxeter-sortable elements, 
and thus our proof does not require $W$ to be finite or even crystallographic. 
For finite crystallographic Coxeter groups, Theorem~\ref{thm:infinite_shellability}
is implied by \cite[Theorem~4.17]{ingalls09noncrossing}. Colin Ingalls and Hugh Thomas considered in
\cite{ingalls09noncrossing} the category of finite dimensional representations of an orientation of the
Coxeter diagram of a finite crystallographic Coxeter group $W$, and considered the corresponding Cambrian 
lattices as a poset of torsion classes of this category. However, their approach cannot be 
applied to non-crystallographic or to infinite Coxeter groups.

Finally, using the fact that every closed interval of $C_{\gamma}$ is EL-shellable, we are able to
determine the homotopy type of the proper parts of these intervals by counting the number of falling chains
with respect to our labeling. It turns out that every open interval is either contractible or spherical,
\emph{i.e.} homotopy equivalent to a sphere. We can further characterize which intervals of $C_{\gamma}$
are contractible and which are spherical, as our second main result shows. Recall that a closed interval
$[x,y]$ in a lattice is called \emph{nuclear} if $y$ is the join of atoms of $[x,y]$.
\begin{theorem}
  \label{thm:cambrian_topology}
	Let $W$ be a (possibly infinite) Coxeter group and let $\gamma\in W$ be a Coxeter element. Every 
	finite open interval in the Cambrian semilattice $C_{\gamma}$ is either contractible or spherical. 
	Furthermore, a finite open interval $(x,y)_{\gamma}$ is spherical if and only if the corresponding 
	closed interval $[x,y]_{\gamma}$ is nuclear. 
\end{theorem}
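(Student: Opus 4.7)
The plan is to leverage Theorem~\ref{thm:infinite_shellability} through the standard consequences of EL-shellability. For any EL-shellable bounded poset, the order complex of the proper part of a closed interval is homotopy equivalent to a wedge of spheres, one sphere of dimension $k-2$ for each falling maximal chain of length $k$ (see \cite{bjorner80shellable}). Hence to prove the contractible-or-spherical dichotomy, it suffices to show that every finite closed interval $[x,y]_{\gamma}$ of $C_{\gamma}$ admits \emph{at most one} falling maximal chain under the EL-labeling constructed earlier in the paper, and then to identify exactly the intervals in which such a chain exists.

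First I would establish the uniqueness of a falling chain. Starting from $x$, the initial edge of a falling chain must carry the smallest label among all labels of cover edges $x\lessdot z$ with $z\leq y$; in the $\gamma$-sortable realization, this forces $z$ to be a specific atom of $[x,y]_{\gamma}$. An inductive argument then shows that once one reaches an intermediate element $z_{i}$, the next step in a falling chain is again uniquely determined, owing to the rigidity of how descents can appear in $\gamma$-sortable words: the label drop along the chain forces the next cover label to be minimal among the admissible ones, and sortability leaves only one such cover below $y$. This gives at most one falling maximal chain per interval, which already yields the dichotomy.

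Next I would characterize existence of a falling chain in terms of nuclearity. For the ``nuclear $\Rightarrow$ spherical'' direction, assume $y$ equals the join of the atoms of $[x,y]_{\gamma}$. I would order these atoms by their EL-labels in decreasing fashion and build a maximal chain $x=z_{0}\lessdot z_{1}\lessdot\cdots\lessdot z_{k}=y$ by successively joining in one atom at a time, using that joins inside a finite interval of $C_{\gamma}$ are inherited from the weak order via the Cambrian projection \cite{reading11sortable}. Compatibility of the EL-labeling with these Cambrian joins will guarantee that the resulting label sequence is strictly decreasing, producing the desired falling chain. For the converse, starting from a falling maximal chain, I would read off its cover labels and show, by induction on $i$, that each $z_{i}$ is a join of atoms of $[x,y]_{\gamma}$; applied at $i=k$ this says $y$ is a join of atoms, so $[x,y]_{\gamma}$ is nuclear.

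The hard part will be the uniqueness step in the second paragraph, since it does not follow from EL-shellability alone but really requires the finer structure of $\gamma$-sortable words: one has to track precisely which simple reflections can produce the next cover after a given descent pattern, and rule out competing minimal labels inside $[x,y]_{\gamma}$. A subtler technical point is that in $C_{\gamma}$ joins exist only locally, so in the nuclear direction one must verify that the iterated joins of atoms do not leave $[x,y]_{\gamma}$ and that each such join produces an honest cover rather than a longer jump; this is the place where the detailed interaction between sortable words, their descents, and the EL-labeling constructed for Theorem~\ref{thm:infinite_shellability} must be used most essentially.
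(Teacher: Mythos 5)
Your high-level plan coincides with the paper's: reduce the dichotomy to showing each finite closed interval has at most one falling maximal chain (this is the paper's Theorem~\ref{thm:cambrian_mobius}), characterize existence of that chain by nuclearity (Theorem~\ref{thm:mobius_nuclear}), and invoke \cite{bjorner96shellable}*{Theorem~5.9}. The execution, however, goes off the rails at the uniqueness step. You write that the \emph{initial} edge of a falling chain must carry the \emph{smallest} label among covers of $x$ inside $[x,y]_{\gamma}$. For a falling chain the label sequence is weakly decreasing, so the smallest label sits on the \emph{last} edge, not the first; the initial edge carries the largest label in the chain, and nothing forces it to be extremal among all covers of $x$. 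The paper's actual mechanism is Lemma~\ref{lem:i_0}: the value $i_{0}=\min\bigl(\alpha_{\gamma}(v)\smallsetminus\alpha_{\gamma}(u)\bigr)$ appears in every maximal chain and is the minimum possible label, so in a falling chain it must label the final cover. Combined with the parabolic recursion (Proposition~\ref{prop:recursion}, Lemma~\ref{lem:labeling_recursion}), this pins down the penultimate element $v_{1}\lessdot_{\gamma}v$ with $s\not\leq_{\gamma}v_{1}$ in the case $s\not\leq_{\gamma}u$, $s\leq_{\gamma}v$, and an induction on length and rank (with the subcases $s\not\leq_{\gamma}v$ and $s\leq_{\gamma}u$ handled by passing to $W_{\langle s\rangle}$ or to $[su,sv]_{s\gamma s}$) finishes. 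Your proposal contains neither the focus on the last edge nor the parabolic induction; a greedy argument from the bottom does not determine a unique cover.

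For the nuclear characterization, your plan to join atoms one at a time in decreasing label order and hope for covers and for monotone labels is also not what the paper does, and the claimed ``compatibility of the EL-labeling with these Cambrian joins'' is exactly the content that needs proof and is not obvious. The paper instead establishes the structural Lemma~\ref{lem:nuclear_sub}: when $s\not\leq_{\gamma}u\leq_{\gamma}v$ and $s\leq_{\gamma}v$, nuclearity of $[u,v]_{\gamma}$ is equivalent to the existence of a (unique) $v'$ with $s\not\leq_{\gamma}v'\lessdot_{\gamma}v$ and $[u,v']_{\gamma}$ nuclear; Lemma~\ref{lem:joincov} provides the cover relations needed along the way. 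With this, the same length-and-rank induction produces a falling chain from a nuclear interval and vice versa. You correctly flagged this step as where the real work lies, but the sketch leaves all the required structure of $\gamma$-sortable words, the Cambrian projection $\pi_{\downarrow}^{\gamma}$, and the recursive labeling unexamined, and the incorrect claim in the uniqueness step would have to be repaired before the argument could be completed.
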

For finite Coxeter groups, Theorem~\ref{thm:cambrian_topology} is implied by concatenating
\cite[Theorem~1.1]{reading05lattice} and \cite[Propositions~5.6~and~5.7]{reading05lattice}. Nathan
Reading's approach in the cited article was to investigate fan posets of central hyperplane arrangements.
He showed that for a finite Coxeter group $W$ the Cambrian lattices can be viewed as fan posets of a
fan induced by certain regions of the Coxeter arrangement of $W$ which are determined by orientations of
the Coxeter diagram of $W$. The tools Nathan Reading developed in \cite{reading05lattice} apply to a much
larger class of fan posets, but cannot be applied directly to infinite Coxeter groups. 

The proofs of Theorems~\ref{thm:infinite_shellability} and \ref{thm:cambrian_topology} are obtained 
completely within the framework of Coxeter-sortable elements and thus have the advantage that 
they are uniform and direct. 

\smallskip

This article is organized as follows. In Section~\ref{sec:preliminaries}, we recall the necessary
order-theoretic concepts, as well as the definition of EL-shellability. Furthermore, we recall the 
definition of Coxeter groups, and the construction of the Cambrian semilattices. In
Section~\ref{sec:shellability_cambrian}, we define a labeling of the Hasse diagram of a Cambrian
semilattice and give a case-free proof that this labeling is indeed an EL-labeling for every closed
interval of this semilattice, thus proving Theorem~\ref{thm:infinite_shellability}. In 
Section~\ref{sec:applications}, we prove Theorem~\ref{thm:cambrian_topology}, by counting the falling
maximal chains with respect to our labeling and by applying \cite[Theorem~5.9]{bjorner96shellable} which
relates the number of falling maximal chains in a poset to the homotopy type of the corresponding order
complex. 
The characterization of the spherical intervals of $C_{\gamma}$ follows from 
Theorem~\ref{thm:mobius_nuclear}. 
 
\section{Preliminaries}
  \label{sec:preliminaries}
In this section, we recall the necessary definitions, which are used throughout the article. For 
further background on posets, we refer to \cite{davey02introduction} or to 
\cite{stanley01enumerative}, where in addition some background on lattices and lattice congruences 
is provided. An introduction to poset topology can be found in either 
\cite{bjorner96topological} or \cite{wachs07poset}. For more background on Coxeter groups, we refer to
\cite{bjorner05combinatorics} and \cite{humphreys90reflection}.
 
\subsection{Posets and EL-Shellability}
  \label{sec:shellability}
Let $(P,\leq_{P})$ be a finite partially ordered set (\emph{poset} for short). We say that $P$ is 
\emph{bounded} if it has a unique minimal and a unique maximal element, which we usually denote by
$\hat{0}$ and $\hat{1}$, respectively. For $x,y\in P$, we say that $y$ \emph{covers} $x$ (and write 
$x\lessdot_{P} y$) if $x\leq_{P} y$ and there is no $z\in P$ such that $x<_{P} z<_{P} y$. We denote the
set of all covering relations of $P$ by $\mathcal{E}(P)$.  

For $x,y\in P$ with $x\leq_{P} y$, we define the closed interval $[x,y]$ to be the set 
$\{z\in P\mid x\leq_{P} z\leq_{P} y\}$. Similarly, we define the open interval 
$(x,y)=\{z\in P\mid x<_{P}z<_{P}y\}$. A chain $c:x=p_0\leq_{P} p_1\leq_{P}\cdots\leq_{P} p_s=y$ is 
called \emph{maximal} if $(p_i,p_{i+1})\in \mathcal{E}(P)$ for every $0\leq i\leq s-1$. 

Let $(P,\leq_{P})$ be a bounded poset and let
$c:\hat{0}=p_0\lessdot_{P} p_1\lessdot_{P}\cdots\lessdot_{P} p_s=\hat{1}$ be a maximal chain of $P$. Given 
another poset $(\Lambda,\leq_{\Lambda})$, a map $\lambda:\mathcal{E}(P)\to\Lambda$ is called 
\emph{edge-labeling of $P$}. We denote the sequence 
$\bigl(\lambda(p_0,p_1),\lambda(p_1,p_2),\ldots,\lambda(p_{s-1},p_s)\bigr)$ of edge-labels of $c$ by 
$\lambda(c)$. The chain $c$ is called \emph{rising} (respectively \emph{falling}) if
$\lambda(c)$ is a strictly increasing (respectively weakly decreasing) sequence. 
For two words $(p_1,p_2,\ldots,p_s)$ and $(q_1,q_2,\ldots ,q_t)$ in the alphabet $\Lambda$, we write 
$(p_1,p_2,\dots,p_s)\leq_{\Lambda^*}(q_1,q_2,\dots,q_t)$ if and only if either 
\begin{align*}
	& p_i=q_i, && \hspace*{-3cm}\mbox{for}\;1\leq i\leq s\;\mbox{and}\;s\leq t,\quad\mbox{or}\\
	& p_i<_{\Lambda}q_i, && \hspace*{-3cm}\mbox{for the least}\;i\;\mbox{such that}\;p_i\neq q_i.
\end{align*}
A maximal chain $c$ of $P$ is called \emph{lexicographically first} among the maximal chains of $P$ 
if for every other maximal chain $c'$ of $P$ we have $\lambda(c)\leq_{\Lambda^*} \lambda(c')$. 
An edge-labeling of $P$ is called \emph{EL-labeling} if for every closed interval $[x,y]$ in $P$ there
exists a unique rising maximal chain which is lexicographically first among all maximal chains in
$[x,y]$. A bounded poset that admits an EL-labeling is called \emph{EL-shellable}.

Let us recall that the M\"obius function $\mu$ of $P$ is the map $\mu:P\times P\to\mathbb{Z}$ defined
recursively by
\begin{displaymath}
	\mu(x,y)=\begin{cases}
		1, & x=y\\
		-\sum_{x\leq_{P}z<_{P}y}{\mu(x,z)}, & x<_{P}y\\
		0, & \mbox{otherwise}.
	\end{cases}
\end{displaymath}

A remarkable property of EL-shellable posets is that we can compute the value of the M\"obius function
for every closed interval of $P$ from the labeling, as is stated in the following 
proposition\footnote{Actually, Proposition~5.7 in \cite{bjorner96shellable} is stated for posets admitting
a so-called \emph{CR-labeling}. EL-shellable posets are a particular instance of this class of posets, and 
for the scope of this article it is sufficient to restrict our attention to these.}.
\begin{proposition}[\cite{bjorner96shellable}*{Proposition~5.7}]
	\label{prop:mobius}
	Let $(P,\leq_{P})$ be an EL-shellable poset, and let $x,y\in P$ with $x\leq_{P} y$. Then,
	\begin{multline*}
		\mu(x,y)=\mbox{number of even length falling maximal chains in}\;[x,y]\\
			-\;\mbox{number of odd length falling maximal chains in}\;[x,y].
	\end{multline*}
\end{proposition}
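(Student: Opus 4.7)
The plan is to derive the Möbius formula from the topological consequences of EL-shellability. First I would invoke Philip Hall's theorem, which identifies the Möbius function with the reduced Euler characteristic of the order complex of the open interval:
\begin{displaymath}
  \mu(x,y)=\tilde{\chi}\bigl(\Delta((x,y))\bigr).
\end{displaymath}
This reduces the statement to a computation of $\tilde{\chi}\bigl(\Delta((x,y))\bigr)$ that can read off from the EL-labeling.

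Next I would appeal to the main structural consequence of EL-shellability (in the not-necessarily-pure sense): the maximal chains of $[x,y]$, listed in lexicographic order of their label sequences, form a shelling of $\Delta((x,y))$, where the facet associated to a maximal chain $c:\hat{0}=p_0\lessdot p_1\lessdot\cdots\lessdot p_\ell=\hat{1}$ has dimension $\ell-2$. A standard analysis of such a lexicographic shelling shows that the \emph{restriction face} of this facet consists precisely of those interior elements $p_i$ (with $0<i<\ell$) at which the labels descend, that is $\lambda(p_{i-1},p_i)>_{\Lambda}\lambda(p_i,p_{i+1})$. Consequently the facets whose restriction equals the whole facet, the \emph{homology facets}, are exactly the falling maximal chains of $[x,y]$.

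From general (nonpure) shelling theory it then follows that $\Delta((x,y))$ is homotopy equivalent to a wedge of spheres, with one sphere of dimension $\ell(c)-2$ for every falling maximal chain $c$ of length $\ell(c)$. Taking reduced Euler characteristics gives
\begin{displaymath}
  \mu(x,y)=\tilde{\chi}\bigl(\Delta((x,y))\bigr)=\sum_{c\text{ falling}}(-1)^{\ell(c)-2}=\sum_{c\text{ falling}}(-1)^{\ell(c)},
\end{displaymath}
which is exactly the difference between the number of even-length falling maximal chains and the number of odd-length falling maximal chains in $[x,y]$.

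The main obstacle is the identification, inside the lexicographic shelling, of the restriction face of each facet with its descent set. The argument uses the defining property of an EL-labeling in a crucial way: at a non-descent position $i$ of $c$, the sub-interval $[p_{i-1},p_{i+1}]$ admits, by the uniqueness-of-rising-chain condition, a lexicographically earlier maximal chain, and prepending and appending the rest of $c$ produces a lexicographically earlier maximal chain of $[x,y]$ that agrees with $c$ away from the index $i$; this translates directly into the shelling-theoretic statement that the facet obtained from $c$ by deleting $p_i$ is contained in an earlier facet. Once this combinatorial core is in place, the passage through shelling theory and Philip Hall's theorem to the desired Möbius identity is routine.
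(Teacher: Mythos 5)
This proposition is cited in the paper from Bj\"orner and Wachs \cite{bjorner96shellable} without proof, so there is no internal argument to compare against; your outline reproduces the standard derivation via nonpure shelling theory. The skeleton is correct: Philip Hall's theorem identifies $\mu(x,y)$ with $\tilde{\chi}\bigl(\Delta((x,y))\bigr)$, the EL-labeling induces a lexicographic shelling whose restriction map sends a facet to its descent set, the homology facets are the falling maximal chains, and the wedge-of-spheres decomposition together with the Euler-characteristic computation $\sum_{c\text{ falling}}(-1)^{\ell(c)-2}$ gives exactly the stated signed count.

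However, your justification of the key identification $\mathcal{R}(c)=\{p_i: \lambda(p_{i-1},p_i)\not<\lambda(p_i,p_{i+1})\}$ is stated backwards. You claim that at a \emph{non-descent} position $i$ the interval $[p_{i-1},p_{i+1}]$ admits a lexicographically earlier maximal chain and hence $c\smallsetminus\{p_i\}$ lies in an earlier facet. But at a non-descent (ascent) position $p_{i-1}\lessdot p_i\lessdot p_{i+1}$ \emph{is} the rising chain of $[p_{i-1},p_{i+1}]$, which by the EL-axiom is lexicographically first in that interval, so no earlier chain exists; $c\smallsetminus\{p_i\}$ is in fact a new face. The correct direction is the opposite: at a \emph{descent} position $p_i$ the two-step chain through $p_i$ is not rising, so the unique rising (hence lexicographically first) chain of $[p_{i-1},p_{i+1}]$ is a different, earlier one, and splicing it into $c$ shows that $c\smallsetminus\{p_i\}$ is covered by an earlier facet. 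More generally $F\subseteq c$ is old exactly when $F$ misses some descent, which is what makes $\mathcal{R}(c)$ equal the descent set. The mechanism you invoke (replace a segment of $c$ by the unique rising chain of a sub-interval) is the right one; you have simply applied it at the wrong positions, and the argument as written would prove the negation of what you need.
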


\subsection{Coxeter Groups and Weak Order}
  \label{sec:weak_order}
Let $W$ be a (possibly infinite) group, which is generated by the finite set 
$S=\{s_{1},s_{2},\ldots,s_{n}\}$, where $\varepsilon\in W$ denotes the identity. Let 
$m=(m_{i,j})_{1\leq i,j\leq n}$ be a symmetric $(n\times n)$-matrix, where the entries are either 
positive integers or the formal symbol $\infty$, and which satisfies $m_{i,i}=1$ for all $1\leq i\leq n$, 
and $m_{i,j}\geq 2$ otherwise. (We use the convention that $\infty$ is formally larger than any natural 
number.) We call $W$ a \emph{Coxeter group} if its generators satisfy 
\begin{displaymath}
	(s_{i}s_{j})^{m_{i,j}}=\varepsilon,\quad\mbox{for}\;1\leq i,j\leq n.
\end{displaymath}
We interpret the case $m_{i,j}=\infty$ as stating that there is no relation between the generators $s_{i}$ 
and $s_{j}$, and call the matrix $m$ the \emph{Coxeter matrix of $W$}. The \emph{Coxeter diagram of $W$} is 
the graph $G=(V,E)$, with $V=S$ and $E=\bigl\{\{s_{i},s_{j}\}\mid m_{i,j}\geq 3\bigr\}$. In addition, an
edge $\{s_{i},s_{j}\}$ of $G$ is labeled by the value $m_{i,j}$ if and only if $m_{i,j}\geq 4$.

Since $S$ is a generating set of $W$, we can write every element $w\in W$ as a product of the elements
in $S$, and we call such a word a \emph{reduced word for $w$} if it has minimal length. More
precisely, define the \emph{word length} on $W$ (with respect to $S$) as
\begin{displaymath}
	\ell_{S}:W\to\mathbb{N},\quad 
		w\mapsto\min\{k\mid w=s_{i_{1}}s_{i_{2}}\cdots s_{i_{k}}\;
		\mbox{and}\;s_{i_{j}}\in S\;\mbox{for all}\;1\leq j\leq k\}.
\end{displaymath}
If $\ell_{S}(w)=k$, then every product of $k$ generators which yields $w$ is a reduced word for $w$. 
Define the \emph{(right) weak order of $W$} by 
\begin{displaymath}
	u\leq_{S} v\quad\mbox{if and only if}\quad \ell_{S}(v)=\ell_{S}(u)+\ell_{S}(u^{-1}v).
\end{displaymath}
The poset $(W,\leq_{S})$ is a graded meet-semilattice, the so-called \emph{weak-order semilattice of $W$}, 
and $\ell_{S}$ is its rank function. Moreover, $(W,\leq_{S})$ is \emph{finitary} meaning that every closed 
interval of $(W,\leq_{S})$ is finite. In the case where $W$ is finite, there exists a unique longest word 
$w_{o}$ of $W$, and $(W,\leq_{S})$ is a lattice.

\subsection{Coxeter-Sortable Words}
  \label{sec:coxeter_sortable}
From now on, we consider the Coxeter element $\gamma=s_{1}s_{2}\cdots s_{n}$, and define the half-infinite 
word 
\begin{displaymath}
	\gamma^{\infty}=s_{1}s_{2}\cdots s_{n}\vert s_{1}s_{2}\cdots s_{n}\vert\cdots. 
\end{displaymath}
The vertical bars in the representation of $\gamma^{\infty}$ are ``dividers'', which have no influence
on the structure of the word, but shall serve for a better readability. Clearly, every reduced word
for $w\in W$ can be considered as a subword of $\gamma^{\infty}$. Among all reduced words for $w$,
there is a unique reduced word, which is lexicographically first considered as a subword of 
$\gamma^{\infty}$. This reduced word is called the \emph{$\gamma$-sorting word of $w$}. 
\begin{example}
	Consider the Coxeter group $W=\mathfrak{S}_{5}$, generated by $S=\{s_{1},s_{2},s_{3},s_{4}\}$, where 
	$s_{i}$ corresponds to the transposition $(i,i+1)$ for all $i\in\{1,2,3,4\}$ and let
	$\gamma=s_1s_2s_3s_4$. Clearly, $s_{1}$ and $s_{4}$ commute. Hence, 
	$w_{1}=s_{1}s_{2}\vert s_{1}s_{4}$ and $w_{2}=s_{1}s_{2}s_{4}\vert s_{1}$ are reduced words
	for the same element $w\in W$. Considering $w_{1}$ and $w_{2}$ as subwords of $\gamma^{\infty}$, we 
	find that $w_{2}$ is a lexicographically smaller subword of $\gamma^{\infty}$ than $w_{1}$ is. There 
	are six other reduced words for $w$, namely
	\begin{align*}
		w_{3}=s_{1}s_{4}\vert s_{2}\vert s_{1}, && w_{4} = s_{4}\vert s_{1}s_{2}\vert s_{1}, 
		  && w_{5} = s_{4}\vert s_{2}\vert s_{1}s_{2},\\
		w_{6}=s_{2}s_{4}\vert s_{1}s_{2}, && w_{7} = s_{2}\vert s_{1}s_{4}\vert s_{2}, 
		  && w_{8} = s_{2}\vert s_{1}s_{2}s_{4}.
	\end{align*}
	It is easy to see that among these $w_{2}$ is the lexicographically first subword of 
	$\gamma^{\infty}$, and hence $w_{2}$ is the $\gamma$-sorting word of $w$.
\end{example}

In the following, we consider only $\gamma$-sorting words, and write
\begin{equation}
  \label{eq:presentation}
	w=s_1^{\delta_{1,1}}s_2^{\delta_{1,2}}\cdots s_n^{\delta_{1,n}}\,\vert\,
	s_1^{\delta_{2,1}}s_2^{\delta_{2,2}}\cdots s_n^{\delta_{2,n}}\,\vert\,
	\cdots\,\vert\,s_1^{\delta_{l,1}}s_2^{\delta_{l,2}}\cdots s_n^{\delta_{l,n}},
\end{equation}
where $\delta_{i,j}\in\{0,1\}$ for $1\leq i\leq l$ and $1\leq j\leq n$. For each 
$i\in\{1,2,\ldots,l\}$, we say that 
\begin{displaymath}
	b_{i}=\{s_{j}\mid \delta_{i,j}=1\}\subseteq S
\end{displaymath}
is the \emph{$i$-th block of $w$}. We consider the blocks of $w$ sometimes as sets and sometimes as
subwords of $\gamma$, depending on how much structure we need. We say that $w$ is 
\emph{$\gamma$-sortable} if and only if $b_{1}\supseteq b_{2}\supseteq\cdots\supseteq b_{l}$.
\begin{example}
	Let us continue the previous example. We have seen that $w_{2}=s_{1}s_{2}s_{4}\vert s_{1}$ is a 
	$\gamma$-sorting word in $W$, and $b_{1}=\{s_{1},s_{2},s_{4}\}$, and $b_{2}=\{s_{1}\}$. Since 
	$b_{2}\subseteq b_{1}$, we see that $w_{2}$ is indeed $\gamma$-sortable.
\end{example}

The $\gamma$-sortable words of $W$ are characterized by a recursive property which we will describe next. A
generator $s\in S$ is called \emph{initial in $\gamma$} if it is the first letter in some reduced word for
$\gamma$. For some subset $J\subseteq S$, we denote by $W_{J}$ the parabolic subgroup of $W$ generated by
the set $J$, and for $s\in S$ we write $\langle s\rangle=S\setminus\{s\}$. For $w\in W$, and $J\subseteq S$, 
denote by $w_{J}$ the restriction of $w$ to the parabolic subgroup $W_{J}$.

\begin{proposition}[\cite{reading11sortable}*{Proposition~2.29}]
  \label{prop:recursion}
	Let $W$ be a Coxeter group, $\gamma$ a Coxeter element and let $s$ be initial in $\gamma$. Then an 
	element $w\in W$ is $\gamma$-sortable if and only if
	\begin{enumerate}
		\item[(i)] $s\leq_{S}w$ and $sw$ is $s\gamma s$-sortable,\quad or
		\item[(ii)] $s\not\leq_{S}w$ and $w$ is an $s\gamma$-sortable word of $W_{\langle s\rangle}$.
	\end{enumerate}
\end{proposition}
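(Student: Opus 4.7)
The plan is to unwind the recursive description by working directly with the half-infinite word $\gamma^{\infty}$ and the greedy construction of $\gamma$-sorting words. Since $s$ is initial in $\gamma$, one may write $\gamma = s\gamma'$ where $\gamma' = s_{i_{2}}\cdots s_{i_{n}}$ is a product of the remaining simple generators (in some order). Then $s\gamma s = \gamma's$ is again a Coxeter element of $W$, while $s\gamma = \gamma'$ is a Coxeter element of the parabolic subgroup $W_{\langle s\rangle}$. Crucially, deleting the leading $s$ from $\gamma^{\infty}$ produces, after re-dividing into blocks of length $n$, precisely $(s\gamma s)^{\infty}$; whereas deleting every occurrence of $s$ from $\gamma^{\infty}$ produces $(s\gamma)^{\infty}$.

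The next ingredient is the greedy characterization of the $\gamma$-sorting word: read $\gamma^{\infty}$ from left to right and, at each letter $t$, include $t$ in the partial product $u$ built so far if and only if $ut \leq_{S} w$. In particular the first letter $s$ is included if and only if $s\leq_{S}w$, which already produces the dichotomy in the statement of the proposition. I will use this to establish the two equivalences separately.

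For (i), assume $s\leq_{S}w$. By the greedy rule the $\gamma$-sorting word of $w$ begins with $s$, and stripping this $s$ yields a reduced word for $sw$ which, under the identification above, is the lex-first subword of $(s\gamma s)^{\infty}$ spelling $sw$, hence the $s\gamma s$-sorting word of $sw$. The blocks $b'_{i}$ of $sw$ under $s\gamma s$ are then recovered from the blocks $b_{i}$ of $w$ under $\gamma$ by the bijection $b'_{i} = (b_{i}\setminus\{s\}) \cup (\{s\}\cap b_{i+1})$, since the conjugation shifts every $s$-entry into the preceding block. An elementwise check shows that $b'_{i}\supseteq b'_{i+1}$ holds for all $i$ if and only if $b_{i}\supseteq b_{i+1}$ holds for all $i$: the non-$s$ content is controlled by the same nesting on $b_{i}\cap\langle s\rangle$, and the $s$-content just reduces to the implication $s\in b_{i+2}\Rightarrow s\in b_{i+1}$, which is the nesting at one position. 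This proves the equivalence in (i) in both directions.

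For (ii), assume $s\not\leq_{S}w$. The greedy rule skips the leading $s$, so the $\gamma$-sorting word of $w$ is a subword of $\gamma^{\infty}$ that avoids its first $s$. If $w$ is $\gamma$-sortable, then $s\notin b_{1}$, and by nesting $s\notin b_{i}$ for every $i$, so $w\in W_{\langle s\rangle}$. In this situation the $\gamma$-sorting word of $w$ and its $s\gamma$-sorting word inside $W_{\langle s\rangle}$ are literally the same reduced expression, because deleting the $s$'s from $\gamma^{\infty}$ yields $(s\gamma)^{\infty}$ block by block. The block decomposition is therefore preserved verbatim, and the two sortability conditions coincide. Conversely, any $s\gamma$-sortable word of $W_{\langle s\rangle}$ represents an element of $W_{\langle s\rangle}$, for which $s\not\leq_{S}w$ automatically, and its nested blocks, reinterpreted as subsets of $S$, still witness $\gamma$-sortability.

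The main obstacle is the bookkeeping in case (i): the conjugation $\gamma\mapsto s\gamma s$ shifts the block boundaries by one letter, so the $s$-entries of $w$ appear one block earlier in $sw$, and the bijection $b_{i}\leftrightarrow b'_{i}$ must be written down carefully enough to verify the equivalence of the two nesting conditions. Once this combinatorial dictionary is in hand, both implications in (i) and both implications in (ii) are immediate consequences of the greedy construction of $\gamma$-sorting words together with the correspondences $\gamma^{\infty}\setminus\{\text{initial }s\}=(s\gamma s)^{\infty}$ and $\gamma^{\infty}\setminus\{\text{all }s\}=(s\gamma)^{\infty}$.
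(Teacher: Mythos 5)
The paper does not prove this proposition; it is cited verbatim as Proposition~2.29 of Reading and Speyer's \emph{Sortable elements in infinite Coxeter groups}, so there is no in-paper argument against which to compare. Your proof is a correct direct verification from the definition of the $\gamma$-sorting word as the lexicographically first reduced subword of $\gamma^{\infty}$. The three observations you isolate --- removing the initial $s$ from $\gamma^{\infty}$ and re-blocking gives $(s\gamma s)^{\infty}$, removing every occurrence of $s$ gives $(s\gamma)^{\infty}$, and the induced block dictionary $b'_{i}=(b_{i}\setminus\{s\})\cup(\{s\}\cap b_{i+1})$ --- are precisely the combinatorial content needed, and the equivalence of the two nesting conditions in each case follows by the check you outline.

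Two minor points are worth tightening. First, the greedy rule as you state it (``include $t$ iff $ut\leq_{S}w$'') is not quite correct: if $t$ is a right descent of the current prefix $u$, then $ut\leq_{S}u\leq_{S}w$ holds automatically even though $t$ must not be included; the right condition is $t\leq_{S}u^{-1}w$, equivalently $ut\leq_{S}w$ together with $\ell_{S}(ut)=\ell_{S}(u)+1$. You only appeal to the rule at the very first letter, where $u=\varepsilon$ and the two formulations coincide, so nothing in your argument breaks, but the formulation should be repaired if the greedy step is to be used more broadly. Second, in the block-shift equivalence in part (i) you should record explicitly that $s\in b_{1}$, which is exactly what $s\leq_{S}w$ provides; this is what makes the $s$-content implication $s\in b_{i+1}\Rightarrow s\in b_{i}$ over all $i\geq 2$ (coming from $s\gamma s$-sortability of $sw$) equivalent to the same implication over all $i\geq 1$ (needed for $\gamma$-sortability of $w$), since the $i=1$ instance is then vacuous.
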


\begin{remark}
  \label{rem:dependence}
	The property of being $\gamma$-sortable does not depend on the choice of a reduced word for $\gamma$,
	see \cite{reading11sortable}*{Section~2.7}. 
	For $w\in W$, let $w_{1}$ and $w_{2}$ be the $\gamma$-sorting words of $w$
	with respect to two different reduced words $\gamma_{1}$ and $\gamma_{2}$ for $\gamma$. 
	Since $\gamma_{1}$ and $\gamma_{2}$ differ only in commutations of letters, it is clear 
	that $w_{1}$ and $w_{2}$ differ also only in commutations of letters, with no commutations across 
	dividers. Hence, the $i$-th block of $w_{1}$, considered as a subset of $S$, is equal to the $i$-th 
	block of $w_{2}$, considered as a subset of $S$. However, the $i$-th block of $w_{1}$, considered as 
	a subword of $\gamma_{1}$ , is different from the $i$-th block of $w_{2}$, considered as a subword 
	of $\gamma_{2}$. 
\end{remark}

\subsection{Cambrian Semilattices}
  \label{sec:cambrian}
In \cite{reading11sortable}*{Section~7} the \emph{Cambrian semilattice $C_{\gamma}$} was defined as the 
sub-semilattice of the weak order on $W$ consisting of all $\gamma$-sortable elements. That $C_{\gamma}$ is
well-defined follows from the following theorem.

\begin{theorem}[\cite{reading11sortable}*{Theorem~7.1}]
  \label{thm:joins_meets}
	Let $A$ be a collection of $\gamma$-sortable elements of $W$. If $A$ is nonempty, then $\bigwedge A$
	is $\gamma$-sortable. If $A$ has an upper bound, then $\bigvee A$ is $\gamma$-sortable. 
\end{theorem}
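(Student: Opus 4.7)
The plan is to prove both statements simultaneously by induction, using Proposition~\ref{prop:recursion} as the main recursive tool. The induction parameters are the rank $n=|S|$ of $W$ and the length $\ell_{S}$ of the meet (respectively join), lexicographically ordered so that any appeal to the inductive hypothesis involves strictly smaller data. Fix $s\in S$ that is initial in $\gamma$; the proof splits into two cases for each statement, according to whether $s$ sits below the meet (respectively join).

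For the meet, write $m=\bigwedge A$. If $s\leq_{S}m$, then $s\leq_{S}w$ for every $w\in A$ (since $s$ is a lower bound of $A$); by Proposition~\ref{prop:recursion}(i), each $sw$ is $s\gamma s$-sortable. Left multiplication by $s$ gives an order-isomorphism from the principal filter $\{w\in W\mid s\leq_{S}w\}$ onto the set of elements of $W$ not having $s$ as a left descent, and this isomorphism preserves meets. Hence $sm=\bigwedge\{sw\mid w\in A\}$, which is $s\gamma s$-sortable by induction on $\ell_{S}(m)$, so $m$ is $\gamma$-sortable. Otherwise $s\not\leq_{S}m$, which forces the existence of some $w_{0}\in A$ with $s\not\leq_{S}w_{0}$ (else $s$ would itself be a lower bound of $A$). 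Proposition~\ref{prop:recursion}(ii) then gives $w_{0}\in W_{\langle s\rangle}$; since $W_{\langle s\rangle}$ is a lower set in weak order (a reduced-word argument using that the support of an element is well-defined), we conclude $m\in W_{\langle s\rangle}$. Now replace each $w\in A$ by its parabolic projection $w_{\langle s\rangle}$: one checks that $w_{\langle s\rangle}$ is $s\gamma$-sortable (by truncating the $\gamma$-sorting word of $w$ to letters in $\langle s\rangle$, which preserves the nested-block condition) and that $m=\bigwedge\{w_{\langle s\rangle}\mid w\in A\}$. Induction in the rank-reduced group $W_{\langle s\rangle}$ with Coxeter element $s\gamma$ then gives that $m$ is $s\gamma$-sortable, whence $\gamma$-sortable by Proposition~\ref{prop:recursion}(ii).

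For the join, let $j=\bigvee A$, assuming $A$ has an upper bound. If $s\leq_{S}j$, replace $A$ by $A'=\{s\vee w\mid w\in A\}$; each such join exists (it is bounded above by the given upper bound together with $s$), lies in the filter above $s$, and satisfies $\bigvee A'=j$. Each $s\vee w$ is $\gamma$-sortable by the inductive hypothesis for pairs, and from there the argument mirrors that of the meet. If $s\not\leq_{S}j$, then no $w\in A$ satisfies $s\leq_{S}w$, so every $w\in A$ lies in $W_{\langle s\rangle}$ and is $s\gamma$-sortable by Proposition~\ref{prop:recursion}(ii); a biclosed-set argument (the inversion set of a join is the biclosure of the union of the inversion sets, and $\Phi^{+}_{\langle s\rangle}$ is biclosed in $\Phi^{+}$) shows that $W_{\langle s\rangle}$ is closed under joins in $W$, so $j\in W_{\langle s\rangle}$, and induction in $W_{\langle s\rangle}$ finishes the case. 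The main obstacle is the careful bookkeeping for the join, where auxiliary claims (such as $s\vee w$ being $\gamma$-sortable) themselves invoke the join statement; this is managed by ordering the induction first on $|S|$ and secondarily on $\ell_{S}$ of the target, so that every use of the hypothesis strictly reduces one of these parameters.
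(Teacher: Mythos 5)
This theorem is not proved in the paper at all; it is quoted verbatim from Reading--Speyer (\cite{reading11sortable}, Theorem~7.1), so there is no ``paper's proof'' to compare against. Evaluating your argument on its own merits: the overall inductive scheme on $(|S|,\ell_S)$ using Proposition~\ref{prop:recursion} is sensible, and the case $s\leq_S m$ of the meet is sound. But two steps are genuinely broken. First, in the meet case $s\not\leq_S m$ you justify ``$w_{\langle s\rangle}$ is $s\gamma$-sortable'' by truncating the $\gamma$-sorting word of $w$ to letters in $\langle s\rangle$. This is false: in $A_{2}$ with $\gamma=s_{1}s_{2}$, the element $w=s_{1}s_{2}$ is $\gamma$-sortable, but $w_{\langle s_{1}\rangle}$ (defined via $\mathrm{inv}(w_{\langle s_1\rangle})=\mathrm{inv}(w)\cap W_{\langle s_1\rangle}$, as in Section~3.3 of this paper) equals the identity, not $s_{2}$. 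Truncation of the sorting word simply does not compute the parabolic restriction $w_{J}$. The fact you want is true, but it rests on the parabolic projection results of Reading--Speyer (the ones this paper invokes in the proof of Lemma~\ref{lem:joincov}, namely their Propositions~3.13 and~6.10), not on any letter-deletion argument.

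Second, the join case $s\leq_S j$ is circular. You replace $A$ by $A'=\{s\vee w\}$ and claim each $s\vee w$ is $\gamma$-sortable ``by the inductive hypothesis for pairs,'' managed because $\ell_S$ of the target strictly drops. It need not drop: take $A=\{s,w\}$ with $s\not\leq_S w$, so $j=s\vee w$, and the ``pair'' you must feed back into the hypothesis is exactly $\{s,w\}$ with the same target $j$ at the same rank and the same length --- this is precisely the statement you are trying to prove. The standard way around this, and the one Reading--Speyer actually use, is to run the join half through the projection $\pi_{\downarrow}^{\gamma}$ (recalled in this paper as~\eqref{eq:projection} together with Theorem~\ref{thm:projection_homomorphism}): $\pi_{\downarrow}^{\gamma}(j)$ is the largest sortable element below $j$, it is $\geq w=\pi_{\downarrow}^{\gamma}(w)$ for every $w\in A$ because $\pi_{\downarrow}^{\gamma}$ is order-preserving, hence it is an upper bound of $A$, forcing $\pi_{\downarrow}^{\gamma}(j)=j$. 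That argument sidesteps the pairwise reduction entirely and is not available to you here because you have not yet established the requisite properties of $\pi_{\downarrow}^{\gamma}$ inside your induction.
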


It turns out that $C_{\gamma}$ is not only a sub-semilattice of the weak order, but also a quotient 
semilattice. The key role in the proof of this property plays the projection $\pi_{\downarrow}^{\gamma}$ which
maps every word $w\in W$ to the unique largest $\gamma$-sortable element below $w$. More precisely if $s$
is initial in $\gamma$, then define
\begin{equation}
  \label{eq:projection}
	\pi_{\downarrow}^{\gamma}(w)=
	  \begin{cases}
	    s\pi_{\downarrow}^{s\gamma s}(sw), & \mbox{if}\;s\leq_{S}w\\
	    \pi_{\downarrow}^{s\gamma}(w_{\langle s\rangle}), & \mbox{if}\;s\not\leq_{S}w,
	  \end{cases}
\end{equation}
and set $\pi_{\downarrow}^{\gamma}(\varepsilon)=\varepsilon$, see \cite{reading11sortable}*{Section~6}. 
The most important properties of this map are stated in the following theorems.

\begin{theorem}[\cite{reading11sortable}*{Theorem~6.1}]
  \label{thm:projection_homomorphism}
	The map $\pi_{\downarrow}^{\gamma}$ is order-preserving. 
\end{theorem}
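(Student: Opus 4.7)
The plan is to prove the stronger characterization that $\pi_\downarrow^\gamma(w)$ equals the $\leq_S$-maximum $\gamma$-sortable element weakly below $w$, from which order-preservation follows at once: if $u\leq_S v$, then every $\gamma$-sortable element in $[\varepsilon,u]$ also lies in $[\varepsilon,v]$, so the maximum of the first set (nonempty, since $\varepsilon$ is $\gamma$-sortable, and bounded above by $u$, hence possessing a $\leq_S$-maximum by Theorem~\ref{thm:joins_meets}) is bounded above by the maximum of the second. In other words, $\pi_\downarrow^\gamma(u)\leq_S\pi_\downarrow^\gamma(v)$ is forced.

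I would establish this maximum characterization by induction on the pair $(|S|,\ell_S(w))$, ordered lexicographically, paralleling the recursive definition~\eqref{eq:projection} and Proposition~\ref{prop:recursion}. Fix $s\in S$ initial in $\gamma$ and let $x$ be $\gamma$-sortable with $x\leq_S w$. By Proposition~\ref{prop:recursion}, either (a) $s\leq_S x$ and $sx$ is $s\gamma s$-sortable, or (b) $s\not\leq_S x$ and $x$ is an $s\gamma$-sortable element of $W_{\langle s\rangle}$. If $s\not\leq_S w$, only case (b) is possible; then $x\leq_S w_{\langle s\rangle}$ inside $W_{\langle s\rangle}$, and the rank-inductive hypothesis in this strictly smaller parabolic gives $x\leq_S\pi_\downarrow^{s\gamma}(w_{\langle s\rangle})=\pi_\downarrow^\gamma(w)$. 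If $s\leq_S w$ and we are in case (a), then $sx\leq_S sw$ since removing a common left descent preserves weak order, so length-induction yields $sx\leq_S\pi_\downarrow^{s\gamma s}(sw)$, and left-multiplying by $s$ produces $x\leq_S s\cdot\pi_\downarrow^{s\gamma s}(sw)=\pi_\downarrow^\gamma(w)$. If $s\leq_S w$ and we are in case (b), then $x\in W_{\langle s\rangle}$ together with $x\leq_S w$ and $s\not\leq_S x$ forces $x\leq_S sw$; length-induction gives $x\leq_S\pi_\downarrow^{s\gamma s}(sw)$, and prepending $s$ only enlarges the inversion set, so $x\leq_S\pi_\downarrow^\gamma(w)$.

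The main obstacle is the bookkeeping needed to keep the induction self-consistent. In parallel one must verify, by the same induction, that the output of~\eqref{eq:projection} is itself $\gamma$-sortable and weakly below $w$, and that $s$ fails to be a left descent of $\pi_\downarrow^{s\gamma s}(sw)$ so that prepending $s$ genuinely adds a new letter --- increasing the length by one and preserving the $\leq_S$-relations used above. These reduce to standard identities for left descent and inversion sets in Coxeter groups combined with Proposition~\ref{prop:recursion}; the only subtle step is verifying that $x\leq_S w$, $s\not\leq_S x$, and $s\leq_S w$ together imply $x\leq_S sw$, the auxiliary descent lemma that threads through the case analysis.
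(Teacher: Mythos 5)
This theorem is cited in the paper from \cite{reading11sortable}*{Theorem~6.1} and is not proved there, so there is no in-paper argument to compare against; I am evaluating your proposal on its own.

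Your overall strategy --- establishing that $\pi_{\downarrow}^{\gamma}(w)$ is the $\leq_{S}$-maximum $\gamma$-sortable element below $w$, and then reading off order-preservation --- is the right one, and the rank-induction case ($s\not\leq_{S}w$) as well as case (a) of the length-induction ($s\leq_{S}x\leq_{S}w$) go through exactly as you say. Also, the ``subtle step'' you flag at the end, that $x\leq_{S}w$, $s\not\leq_{S}x$, $s\leq_{S}w$ imply $x\leq_{S}sw$, is indeed true: it is the weak-order lifting property, see \cite{bjorner05combinatorics}*{Prop.~3.1.6}, and it does not even need $x\in W_{\langle s\rangle}$.

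The gap is in case (b) with $s\leq_{S}w$, at the very last deduction. Having obtained $x\leq_{S}p$ where $p=\pi_{\downarrow}^{s\gamma s}(sw)$, you conclude $x\leq_{S}sp=\pi_{\downarrow}^{\gamma}(w)$ because ``prepending $s$ only enlarges the inversion set.'' This is false: for $s\not\leq_{S}p$, one has $\mbox{inv}(sp)=\{s\}\cup s\,\mbox{inv}(p)\,s$, which is a \emph{conjugate} of $\mbox{inv}(p)$ (together with $\{s\}$), not a superset of it, so $p\leq_{S}sp$ fails in general. Concretely, in $A_{2}$ take $s=s_{1}$ and $p=s_{2}$: then $\mbox{inv}(s_{2})=\{s_{2}\}$ while $\mbox{inv}(s_{1}s_{2})=\{s_{1},\,s_{1}s_{2}s_{1}\}$, so $s_{2}\not\leq_{S}s_{1}s_{2}$. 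Thus $x\leq_{S}p$ does not by itself give $x\leq_{S}sp$, even granting $x\in W_{\langle s\rangle}$ (the same example has $x=p=s_{2}\in W_{\langle s_{1}\rangle}$). Note also, as a smaller issue in the same case, that the length-induction in $(W,s\gamma s)$ only yields $x\leq_{S}p$ after you check that $x$ is in fact $s\gamma s$-sortable; this does hold because $x\in W_{\langle s\rangle}$ has no $s$ in its $s\gamma s$-sorting word, but it deserves a line.

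To actually close case (b), you need more than prefix/descent bookkeeping. Since $x\in W_{\langle s\rangle}$, the inequality $x\leq_{S}sp$ is equivalent to $x\leq_{S}(sp)_{\langle s\rangle}=(\pi_{\downarrow}^{\gamma}(w))_{\langle s\rangle}$, and the natural way to get this is through the compatibility $(\pi_{\downarrow}^{\gamma}(w))_{\langle s\rangle}=\pi_{\downarrow}^{s\gamma}(w_{\langle s\rangle})$ together with the rank-inductive hypothesis applied to $W_{\langle s\rangle}$. That compatibility of $\pi_{\downarrow}$ with parabolic restriction is itself a nontrivial result (it is essentially \cite{reading11sortable}*{Prop.~6.10}), not a descent-set identity, and your sketch does not account for it. So the proposal, as written, has a genuine hole at precisely this point.
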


\begin{theorem}[\cite{reading11sortable}*{Theorem~7.3}]
	For some subset $A\subseteq W$, if $A$ is nonempty, then 
	$\bigwedge\pi_{\downarrow}^{\gamma}(A)=\pi_{\downarrow}^{\gamma}\bigl(\bigwedge A\bigr)$ and if $A$ 
	has an upper bound, then 
	$\bigvee\pi_{\downarrow}^{\gamma}(A)=\pi_{\downarrow}^{\gamma}\bigl(\bigvee A\bigr)$.
\end{theorem}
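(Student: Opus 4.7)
The plan is to prove the two equalities separately. The meet equality is essentially a direct consequence of Theorems~\ref{thm:projection_homomorphism} and~\ref{thm:joins_meets}, while the join equality requires the recursive description of $\pi_{\downarrow}^{\gamma}$ given in~\eqref{eq:projection}.

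For the meet equality, I would first apply Theorem~\ref{thm:projection_homomorphism}: since $\bigwedge A\leq_{S}a$ for every $a\in A$, one has $\pi_{\downarrow}^{\gamma}\bigl(\bigwedge A\bigr)\leq_{S}\pi_{\downarrow}^{\gamma}(a)$ for each $a$, hence $\pi_{\downarrow}^{\gamma}\bigl(\bigwedge A\bigr)\leq_{S}\bigwedge\pi_{\downarrow}^{\gamma}(A)$. Conversely, the right-hand side exists and is $\gamma$-sortable by Theorem~\ref{thm:joins_meets}, and from $\pi_{\downarrow}^{\gamma}(a)\leq_{S}a$ it lies below $\bigwedge A$. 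Because $\pi_{\downarrow}^{\gamma}\bigl(\bigwedge A\bigr)$ is by definition the largest $\gamma$-sortable element below $\bigwedge A$, this forces $\bigwedge\pi_{\downarrow}^{\gamma}(A)\leq_{S}\pi_{\downarrow}^{\gamma}\bigl(\bigwedge A\bigr)$, completing the equality.

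For the join equality, assume $A$ has an upper bound $u$. Then $\pi_{\downarrow}^{\gamma}(u)$ bounds $\pi_{\downarrow}^{\gamma}(A)$ from above by monotonicity, so by Theorem~\ref{thm:joins_meets} the join $\bigvee\pi_{\downarrow}^{\gamma}(A)$ exists and is $\gamma$-sortable, and Theorem~\ref{thm:projection_homomorphism} immediately yields $\bigvee\pi_{\downarrow}^{\gamma}(A)\leq_{S}\pi_{\downarrow}^{\gamma}\bigl(\bigvee A\bigr)$. The reverse inequality is the main obstacle, since it is not automatic for an arbitrary order-preserving projection onto a join-closed subset. I would proceed by induction on the pair $\bigl(|S|,\ell_{S}(\bigvee A)\bigr)$ using~\eqref{eq:projection}: choose $s$ initial in $\gamma$ and split $A=A_{1}\sqcup A_{2}$ according to whether $s\leq_{S}a$ or not. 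When one of $A_{1},A_{2}$ is empty, a single branch of~\eqref{eq:projection} applies uniformly to $\bigvee A$ and to every $a\in A$, reducing the claim to a smaller instance either inside $W_{\langle s\rangle}$ with Coxeter element $s\gamma$, or after left-multiplying by $s$ and passing to $s\gamma s$, each of which is covered by the inductive hypothesis.

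The delicate case is when both $A_{1}$ and $A_{2}$ are nonempty. Then $s\leq_{S}\bigvee A$, so $\pi_{\downarrow}^{\gamma}\bigl(\bigvee A\bigr)=s\pi_{\downarrow}^{s\gamma s}\bigl(s\bigvee A\bigr)$, whereas for $a\in A_{2}$ one has $\pi_{\downarrow}^{\gamma}(a)=\pi_{\downarrow}^{s\gamma}(a_{\langle s\rangle})$, which does not start with $s$. The crucial step will be to show that the join of the contributions from $A_{1}$ already carries enough information to reach $\pi_{\downarrow}^{\gamma}\bigl(\bigvee A\bigr)$, once combined with the $A_{2}$-contributions, using that left-multiplication by $s$ and the restriction $w\mapsto w_{\langle s\rangle}$ are compatible with joins in the weak order whenever the relevant joins exist. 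Verifying that the two recursive branches in~\eqref{eq:projection} produce mutually consistent answers across this boundary is where the essential content of the theorem lies, and is exactly the point treated in \cite{reading11sortable}*{Section~7}.
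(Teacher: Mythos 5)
The paper does not prove this statement; it quotes it verbatim as \cite{reading11sortable}*{Theorem~7.3}, so there is no internal proof to compare against.

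That said, your proposal is worth assessing on its own terms. The meet half is handled cleanly and correctly: one inequality by monotonicity (Theorem~\ref{thm:projection_homomorphism}), the other by observing that $\bigwedge\pi_{\downarrow}^{\gamma}(A)$ is $\gamma$-sortable (Theorem~\ref{thm:joins_meets}) and lies below $\bigwedge A$, then invoking the characterization of $\pi_{\downarrow}^{\gamma}$ as the largest $\gamma$-sortable element below its argument. The easy direction of the join half, $\bigvee\pi_{\downarrow}^{\gamma}(A)\leq_{S}\pi_{\downarrow}^{\gamma}(\bigvee A)$, is likewise fine. The gap is the hard direction, and it is a real one. Your uniform cases ($A_{1}=\emptyset$ or $A_{2}=\emptyset$) already quietly rely on two nontrivial compatibility facts that are nowhere established here: that $s\not\leq_{S}a$ for all $a\in A$ implies $s\not\leq_{S}\bigvee A$, and that the parabolic restriction $w\mapsto w_{\langle s\rangle}$ commutes with joins. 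Both are true and appear in Reading--Speyer, but they need to be cited or proved; stating that they hold ``whenever the relevant joins exist'' glosses over precisely what must be argued. The mixed case $A_{1},A_{2}\neq\emptyset$ is the crux, and there you explicitly defer to \cite{reading11sortable}*{Section~7} rather than carry out the induction. As you yourself observe, join-compatibility of a lower projection onto a join-closed subset is not automatic (one can cook up order-preserving counterexamples), so this deferral is exactly where the content of the theorem lives. In short: your meet argument stands on its own, your join argument is a plausible roadmap but not a proof, and you correctly identify that the essential step is the one you omit.
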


Hence, $\pi_{\downarrow}^{\gamma}$ is a semilattice homomorphism from the weak order on $W$ to $C_{\gamma}$, 
and $C_{\gamma}$ can be considered as the quotient semilattice of the weak order modulo the semilattice 
congruence $\theta_{\gamma}$ induced by the fibers of $\pi_{\downarrow}^{\gamma}$. This semilattice 
congruence is called \emph{Cambrian congruence}. Since the lack of a maximal element is the only obstruction
for the weak order to be a lattice, it follows immediately that the restriction of 
$\pi_{\downarrow}^{\gamma}$ (and hence $\theta_{\gamma}$) to closed intervals of the weak order yields a 
lattice homomorphism (and hence a lattice congruence). Figure~\ref{fig:cambrian_congruence} shows the
Hasse diagram of the weak order on the Coxeter group $A_{3}$ and the congruence classes of 
$\theta_{\gamma}$ for $\gamma=s_{1}s_{2}s_{3}$. 

\begin{figure}
	\centering
	\cambCongA{2.25}{2.25}
	\caption{The Cambrian congruence on the weak-order lattice on $A_{3}$ induced by the Coxeter
	element $s_{1}s_{2}s_{3}$. The non-singleton congruence classes are highlighted.}
	\label{fig:cambrian_congruence}
\end{figure}

In the remainder of this article, we switch frequently between the weak-order semilattice on $W$ and the
Cambrian semilattice $C_{\gamma}$. In order to point out properly which semilattice we consider, we denote
the order relation of the weak-order semilattice by $\leq_{S}$, and the order relation of $C_{\gamma}$ by
$\leq_{\gamma}$. Analogously, we denote a closed (respectively open) interval in the weak-order semilattice 
by $[u,v]_{S}$ (respectively $(u,v)_S$), and a closed (respectively open) interval in $C_{\gamma}$
by $[u,v]_{\gamma}$ (respectively $(u,v)_{\gamma}$). 

\section{EL-Shellability of the Closed Intervals in $C_{\gamma}$}
  \label{sec:shellability_cambrian}
In this section, we define an edge-labeling of $C_{\gamma}$, discuss some of its properties and eventually
prove Theorem~\ref{thm:infinite_shellability}. 

\subsection{The Labeling}
  \label{sec:labeling}
Define for every $w\in W$ the set of positions of the $\gamma$-sorting word of $w$ as 
\begin{displaymath}
	\alpha_{\gamma}(w)=\bigl\{(i-1)\cdot n+j\mid\delta_{i,j}=1\bigr\}\subseteq\mathbb{N},
\end{displaymath}
where the $\delta_{i,j}$'s are the exponents from \eqref{eq:presentation}.
In view of Remark~\ref{rem:dependence}, we notice that the set of positions of $w$ depends not only on the 
choice of the Coxeter element $\gamma$, but also on the choice of the reduced word for $\gamma$. 

\begin{example}
	Let $W=\mathfrak{S}_{4}$, $\gamma=s_1s_2s_3$  and consider 
	$u=s_{1}s_{2}s_{3}\vert s_{2}$, and $v=s_{2}s_{3}\vert s_{2}\vert s_{1}$. 
	Then, $\alpha_{\gamma}(u)=\{1,2,3,5\}$, and $\alpha_{\gamma}(v)=\{2,3,5,7\}$, where $u\in C_{\gamma}$, while
	$v\notin C_{\gamma}$. 
\end{example}
It is not hard to see that an element $w\in W$ lies in $C_{\gamma}$ 
if and only if for all $i>n$ the following holds: if $i\in\alpha_{\gamma}(w)$, then $i-n\in\alpha_{\gamma}(w)$. In the
previous example, we see that $\alpha_{\gamma}(u)$ contains both $5$ and $2$, while $\alpha_{\gamma}(v)$ does not
contain $7-3=4$. 

\begin{lemma}
  \label{lem:alpha}
  	Let $u,v\in W$ with $u\leq_S v$. Then $\alpha_{\gamma}(u)$ is a subset of $\alpha_{\gamma}(v)$.
\end{lemma}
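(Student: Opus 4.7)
My approach is to run the greedy $\gamma$-sorting processes for $u$ and $v$ in parallel and track, step by step, which letters of $\gamma^{\infty}$ each process selects. Let $t_1, t_2, \ldots$ denote the letters of $\gamma^{\infty}$ and set $u_0 = v_0 = \varepsilon$; recursively, $u_p = u_{p-1} t_p$ if $p \in \alpha_{\gamma}(u)$ and $u_p = u_{p-1}$ otherwise, and analogously for $v_p$. Unwinding the definition of the $\gamma$-sorting word shows that $p \in \alpha_{\gamma}(w)$ precisely when $w_{p-1} t_p \leq_{S} w$ with $\ell_{S}(w_{p-1} t_p) = \ell_{S}(w_{p-1}) + 1$, which in turn is equivalent to $t_p \leq_{S} w_{p-1}^{-1} w$.

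The main step is to prove by induction on $p$ the invariant
\begin{equation*}
	u_p^{-1} u \leq_{S} v_p^{-1} v.
\end{equation*}
For $p = 0$ this is the hypothesis $u \leq_{S} v$. Granting the invariant, the lemma follows immediately: if $p \in \alpha_{\gamma}(u)$, then $t_p \leq_{S} u_{p-1}^{-1} u$, and the routine fact that $a \leq_{S} b$ implies every left descent of $a$ is a left descent of $b$ (a direct length calculation using a reduced factorization $b = a \cdot y$) yields $t_p \leq_{S} v_{p-1}^{-1} v$, hence $p \in \alpha_{\gamma}(v)$.

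To verify that the invariant is preserved, I would distinguish four configurations of whether $u$ and $v$ include $t_p$ at step $p$. If both include $t_p$, left-multiplication by the shared left descent $t_p$ shortens both sides by one and the relation survives; if neither includes $t_p$, the invariant is unchanged; the fourth case---$u$ includes $t_p$ while $v$ does not---cannot occur, by the same monotonicity of left descents just used. The only nontrivial case is when only $v$ includes $t_p$: here the required relation $u_{p-1}^{-1} u \leq_{S} t_p v_{p-1}^{-1} v$ is exactly the weak-order lifting property, which states that if $a \leq_{S} b$, $s \leq_{S} b$, and $s \not\leq_{S} a$, then $a \leq_{S} s b$. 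This is the only nontrivial ingredient of the proof; it is a standard fact about the right weak order, derivable from the characterization $a \leq_{S} b \Leftrightarrow b = a \cdot y$ with $\ell_{S}(b) = \ell_{S}(a) + \ell_{S}(y)$, and it is the step I would expect to require the most care to articulate.
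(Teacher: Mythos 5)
Your proof is correct. The key technical ingredient you isolate — the weak-order lifting property (if $a\leq_{S}b$, $s$ is a left descent of $b$ but not of $a$, then $a\leq_{S}sb$) — is indeed what makes the positional inclusion go through, and your four-case analysis of the parallel greedy processes, together with the monotonicity of left descent sets under $\leq_S$, is a complete argument. The lifting property itself is standard; it follows from the exchange condition applied to a reduced word for $b$ obtained by concatenating reduced words for $a$ and $a^{-1}b$ (the deleted letter must land in the $a^{-1}b$ part, else $s$ would be a left descent of $a$).

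The paper's own proof is a single sentence asserting that the inclusion ``follows immediately from the definition of the weak order.'' This is considerably terser than your argument, and the phrase ``any letter appearing in the $\gamma$-sorting word of $u$ has to appear also in the $\gamma$-sorting word of every element greater than $u$'' leaves it to the reader to see why the specific positions in $\gamma^{\infty}$, not merely the multiset of generators, are preserved. Your invariant $u_p^{-1}u\leq_S v_p^{-1}v$ makes this precise: it tracks exactly what can break when $v$'s greedy process picks up a position that $u$'s does not (your case four), and it is there that the lifting property does real work. So while your route is a careful elaboration rather than a conceptually different decomposition, it surfaces a genuine subtlety that the paper's one-liner does not address. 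If one wanted a proof closer in spirit to the rest of the paper, one could also argue by induction on rank and length using the recursion of Proposition~\ref{prop:recursion}, splitting on whether an initial letter $s$ of $\gamma$ lies below $u$, below $v$ but not $u$, or below neither — but your direct argument is shorter and self-contained.

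Two small points of care worth flagging, both of which your argument handles implicitly but are worth keeping in mind if you write this up: the greedy characterization $p\in\alpha_{\gamma}(w)\Leftrightarrow t_p\leq_S w_{p-1}^{-1}w$ needs the fact that the greedy process really does produce the lexicographically first reduced subword of $\gamma^{\infty}$ (available because $\gamma^{\infty}$ contains every reduced word arbitrarily far to the right, so a greedy pick can always be completed); and in your case one, the passage from $a\leq_S b$ with common left descent $s$ to $sa\leq_S sb$ is easiest to see via inversion sets, $\mathrm{inv}(sa)=s(\mathrm{inv}(a)\setminus\{s\})s\subseteq s(\mathrm{inv}(b)\setminus\{s\})s=\mathrm{inv}(sb)$.
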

\begin{proof}
    The $\gamma$-sorting word of an element $w\in W$ is a reduced word for $w$. 
	Thus, it follows immediately from the definition of the weak order that any letter appearing in 
	the $\gamma$-sorting word of $u$ has to appear also in the $\gamma$-sorting word of every element
	that is greater than $w$ in the weak order. 
	Thus, if $u,v\in C_{\gamma}$ with $u\leq_{\gamma}v$, then $\alpha_{\gamma}(u)\subseteq \alpha_{\gamma}(v)$. 
\end{proof}

Denote by $\mathcal{E}(C_{\gamma})$ the set of covering relations of $C_{\gamma}$, and define 
an edge-labeling of $C_{\gamma}$ by
\begin{equation}
  \label{eq:labeling}
	\lambda_{\gamma}:\mathcal{E}(C_{\gamma})\to\mathbb{N},
	  \quad (u,v)\mapsto\min\{i\mid i\in\alpha_{\gamma}(v)\smallsetminus\alpha_{\gamma}(u)\}.
\end{equation}
Figures~\ref{fig:camb_a3} and \ref{fig:camb_b3} show the Hasse diagrams of a Cambrian lattice 
$C_{\gamma}$ of the Coxeter groups $A_{3}$ and $B_{3}$ respectively, together with the labels defined by
the map $\lambda_{\gamma}$. 

\begin{figure}
	\centering
	\cambAThree{2}{2}
	\caption{An $A_{3}$-Cambrian lattice with the labeling as defined in \eqref{eq:labeling}.}
	\label{fig:camb_a3}
\end{figure}

\begin{figure}
	\centering
	\cambBThree{1.25}{1.5}
	\caption{A $B_{3}$-Cambrian lattice, with the labeling as defined in \eqref{eq:labeling}.}
	\label{fig:camb_b3}
\end{figure}

\subsection{Properties of the Labeling}
  \label{sec:properties}
Again in view of Remark~\ref{rem:dependence}, we notice that the definition of $\lambda_{\gamma}$ depends on 
a specific reduced word for $\gamma$. The following lemma shows that the structural properties of 
$\lambda_{\gamma}$ required for the purpose of this article are, however, independent of the choice of 
reduced word for $\gamma$.

\begin{lemma}
  \label{lem:well_defined}
	Let $\gamma\in W$ be a Coxeter element, and let $u,v\in C_{\gamma}$ with $u\leq_{\gamma}v$. The 
	number of maximal falling and rising chains in $[u,v]_{\gamma}$ does not depend on the choice of a 
	reduced word for $\gamma$.
\end{lemma}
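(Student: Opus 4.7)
The plan is to construct a bijection $\phi$ on the maximal chains of $[u,v]_{\gamma}$ with the property that a chain $c$ is rising (respectively falling) with respect to $\lambda_{\gamma_{1}}$ if and only if $\phi(c)$ is rising (respectively falling) with respect to $\lambda_{\gamma_{2}}$. Once such $\phi$ is in hand, the lemma is immediate. Any two reduced words for $\gamma$ are connected by commutations of adjacent commuting generators --- braid moves of order $\geq 3$ are unavailable because each simple reflection appears exactly once in a reduced expression of a Coxeter element --- so it suffices to handle the case where $\gamma_{1}$ and $\gamma_{2}$ differ only by swapping two commuting letters $s_{i},s_{j}$ at positions $p,p+1$.

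A case analysis on $D\cap\{s_{i},s_{j}\}$, where $k$ is the first block in which the sorting words of $x$ and $y$ disagree (this block is intrinsic by Remark~\ref{rem:dependence}) and $D=y_{k}\smallsetminus x_{k}$, shows that $\lambda_{\gamma_{1}}(x,y)=\lambda_{\gamma_{2}}(x,y)$ except in the \emph{sensitive} case where $D$ contains exactly one of $s_{i},s_{j}$ and that element is the $\gamma_{1}$-first letter of $D$; in that case the two labels swap between $(k-1)n+p$ and $(k-1)n+p+1$. Thus, along any maximal chain $c$, the two label sequences coincide off sensitive covers and differ by one at each sensitive cover.

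The involution $\phi$ is built from commutation diamonds $\{w,ws_{i},ws_{j},ws_{i}s_{j}\}$: since $s_{i}$ and $s_{j}$ commute, such diamonds exist in the weak order, and whenever all four elements are $\gamma$-sortable (verified via Proposition~\ref{prop:recursion}), they survive in $C_{\gamma}$ and give a local flip on maximal chains. I would define $\phi$ as a canonical composition of such flips that exchanges the sensitive covers of $c$ pairwise; iterated flipping turns each sensitive cover of one type in $c$ into a sensitive cover of the opposite type in $\phi(c)$, so that rising and falling chains under $\lambda_{\gamma_{1}}$ correspond to rising and falling chains under $\lambda_{\gamma_{2}}$. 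The main obstacle is the situation in which sensitive covers of opposite types are not adjacent in $c$ (or have unequal counts along $c$): these cases require auxiliary, label-preserving flips through other commutation diamonds to rearrange the chain first, and one must check via Theorem~\ref{thm:projection_homomorphism} that all intermediate chains remain in $C_{\gamma}$.
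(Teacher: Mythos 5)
Your overall strategy---reduce to a single adjacent commutation $s_{i}\leftrightarrow s_{j}$, pin down which cover labels change, and repair chains by local diamond flips---is the same as the paper's. Your ``sensitive cover'' analysis is also essentially right, and the justification that only commutations are needed (since each simple reflection occurs once in a reduced word for $\gamma$) matches Remark~\ref{rem:dependence}. However, you create a genuine obstacle for yourself that the paper never faces, and your proposed way around it is a gap.

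The problem is that you try to build a bijection $\phi$ on \emph{all} maximal chains of $[u,v]_{\gamma}$ that preserves rising/falling status. On an arbitrary maximal chain, sensitive covers of the two types (labels $(l-1)n+p$ and $(l-1)n+p+1$) really can be far apart, can occur in unequal numbers, and can be interleaved with other labels; there is no reason a ``canonical composition of commutation flips'' exists that sorts this out, and your appeal to ``auxiliary, label-preserving flips through other commutation diamonds'' is not a construction---it is exactly the part that would need a proof, and you give none. The paper sidesteps all of this by bijecting only rising chains with rising chains (and falling with falling), which is all the lemma needs. On a rising chain the labels are strictly increasing and distinct (Lemma~\ref{lem:i_0}(ii)), so for each block $l$ the two consecutive integers $(l-1)n+p$ and $(l-1)n+p+1$ each occur at most once, and if both occur they must label \emph{adjacent} covers; different blocks $l$ cannot interact because their labels lie in disjoint ranges. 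This is what makes a single, local diamond flip at each offending block suffice, with no rearranging needed. Without that restriction, your plan does not go through as stated, so the ``main obstacle'' paragraph is a real gap rather than a loose end. A secondary point: the diamonds you write down, $\{w,ws_{i},ws_{j},ws_{i}s_{j}\}$, are too naive---Cambrian covers can increase length by more than one (extra letters inserted into later blocks), so the flip element $x'$ is not simply $x_{j-1}s_{j}$; the paper constructs $x'$ by inserting $t$ into the appropriate block and invokes \cite{reading11sortable}*{Lemma~6.6} to justify its $\gamma$-sortability.
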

\begin{proof}
	Say that $w_{1}$ and $w_{2}$ are two reduced words for $\gamma$. Without loss of generality
	we can assume that $w_{2}$ is obtained from $w_{1}$ by exchanging two commuting letters $s,t\in S$,
	and we may assume that $s$ appears before $t$ in $w_{1}$. We write $\lambda_{w_{1}}$ and
	$\lambda_{w_{2}}$ to indicate which reduced word for $\gamma$ we consider, and say that $s$ is the 
	$k$-th letter of $w_{1}$ (thus $t$ is the $(k+1)$-st letter of $w_{1}$, and vice versa for $w_{2}$).
	Let 
	$c:u=x_{0}\lessdot_{\gamma}x_{1}\lessdot_{\gamma}\cdots\lessdot_{\gamma}x_{t}=v$ be a rising chain 
	with respect to the labeling $w_1$.
	
	(1) Suppose that there is a minimal index $j$ such that $\lambda_{w_{1}}(x_{j-1},x_{j})=k+(l-1)n$ for 
	some $l\geq 1$.\quad  Thus, 
	$x_{j}$ is obtained from $x_{j-1}$ by inserting the letter $s$ into the $l$-th block of $x_{j-1}$
	(and possibly inserting more letters into later blocks.) Since $c$ is rising, we know that 
	$\lambda_{w_{1}}(x_{j-2},x_{j-1})<k+(l-1)n<\lambda_{w_{1}}(x_{j},x_{j+1})$. Moreover, since $s$ 
	appears before $t$ in $w_{1}$, and since $j$ is minimal, we conclude 
	$\lambda_{w_{1}}(x_{i},x_{i+1})=\lambda_{w_{2}}(x_{i},x_{i+1})$, for every $i\leq j-2$. 
	Since $w_{2}$ is obtained from 
	$w_{1}$ by exchanging $s$ and $t$, we have 
	$\lambda_{w_{2}}(x_{j-1},x_{j})=\lambda_{w_{1}}(x_{j-1},x_{j})+1=(k+1)+(l-1)n$. 
	
	(1a) If $\lambda_{w_{1}}(x_{j},x_{j+1})>(k+1)+(l-1)n$, then $x_{j+1}$ is obtained from $x_{j}$ either 
	by inserting a letter which appears after $t$ in $w_{1}$ into the $l$-th block of $x_{j}$, or by 
	inserting some letter into the $l'$-th block of $x_{j}$, where $l'>l$ (and possibly inserting more
	letters into later blocks). In both cases, we have 
	$\lambda_{w_{2}}(x_{j},x_{j+1})>(k+1)+(l-1)n$. 
	
	\begin{figure}
		\centering
		\begin{tikzpicture}\scriptsize
			\def\x{1};
			\def\y{1};
			\draw(2*\x,1*\y) node[circle,scale=.5,draw,label=left:$u$](nu){};
			\draw(2*\x,7*\y) node[circle,scale=.5,draw,label=right:$v$](nv){};
			\draw(2*\x,3*\y) node[circle,scale=.5,draw,label=left:$x_{j-1}$](nx1){};
			\draw(1*\x,4*\y) node[circle,scale=.5,draw,label=left:$x_{j}$](nx2){};
			\draw(2*\x,5*\y) node[circle,scale=.5,draw,label=right:$x_{j+1}$](nx3){};
			\draw(3*\x,4*\y) node[circle,scale=.5,draw,label=right:$x'$](nx4){};
			\draw(nx1) -- (nx2);
			\draw(nx1) -- (nx4);
			\draw(nx2) -- (nx3);
			\draw(nx4) -- (nx3);
			\draw[rounded corners](nu) -- (1.67*\x,1.67*\y) -- (2.33*\y,2.33*\y) -- (nx1);
			\draw[rounded corners](nx3) -- (1.67*\x,5.67*\y) -- (2.33*\x,6.33*\y) -- (nv);
			\draw(3*\x,3.05*\y) node{$\leftarrow s,t\notin b_{l}$};
			\draw(4.5*\x,4*\y) node{$\leftarrow s\notin b_{l},t\in b_{l}$};
			\draw(3.5*\x,5.05*\y) node{$\leftarrow s,t\in b_{l}$};
			\draw(-.5*\x,4.05*\y) node{$s\in b_{l},t\notin b_{l}\rightarrow$};
		\end{tikzpicture}
		\caption{Illustrating the case $\lambda_{w_{1}}(x_{j},x_{j+1})=(k+1)+(l-1)n$ in the proof of
		  Lemma~\ref{lem:well_defined}.}
		\label{fig:commuting}
	\end{figure}
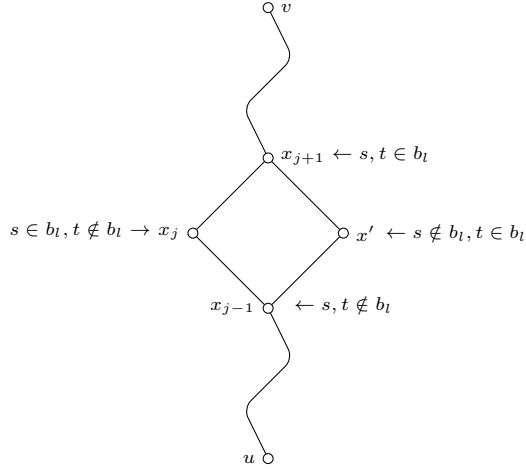
	
	(1b) If $\lambda_{w_{1}}(x_{j},x_{j+1})=(k+1)+(l-1)n$, then $x_{j+1}$ is obtained from $x_{j}$ by 
	inserting the letter $t$ into the $l$-th block of $x_{j}$ (and possibly inserting more letters into 
	later blocks), which implies $\lambda_{w_{2}}(x_{j},x_{j+1})=k+(l-1)n$. Hence, $c$ is not rising 
	with respect to $\lambda_{w_{2}}$. However, $x_{j+1}$ is obtained from $x_{j-1}$ by inserting the 
	letters $s$ and $t$ into the $l$-th block of $x_{j-1}$ (and possibly inserting more letters into 
	later blocks). Since $s$ and $t$ commute it does not matter which letter is inserted first. (Note 
	that we need here that the $\gamma$-sortability of $x_{j+1}$ does not depend on a reduced word for 
	$\gamma$, see \cite{reading11sortable}*{Lemma~6.6}.) This means in particular that the word $x'$ 
	obtained from $x_{j-1}$ by inserting the letter $t$ into the $l$-th block of $x_{j-1}$ (and possibly 
	inserting more letters into later blocks) is $\gamma$-sortable, and we have 
	$x_{j-1}\lessdot_{\gamma}x'\lessdot_{\gamma}x_{j+1}$. It follows that
	$\lambda_{w_{1}}(x_{j-1},x')=(k+1)+(l-1)n$ and $\lambda_{w_{1}}(x',x_{j+1})=k+(l-1)n$. This implies
	$\lambda_{w_{2}}(x_{j-1},x')=k+(l-1)n$ and $\lambda_{w_{2}}(x',x_{j+1})=(k+1)+(l-1)n$. Thus,
	the chain $c':u=x_{0}\lessdot_{\gamma}x_{1}\lessdot_{\gamma}\cdots\lessdot_{\gamma}x_{j-1}
	\lessdot_{\gamma}x'\lessdot_{\gamma}x_{j+1}$ is rising 
	with respect to $\lambda_{w_{2}}$ but not rising with respect to $\lambda_{w_{1}}$. (See 
	Figure~\ref{fig:commuting} for an illustration.) 
	
	We repeat the same procedure if there exists another index $j'>j$ such that 
	$\lambda_{w_{1}}(x_{j'-1},x_{j'})=k+(l'-1)n$, for some $l'>l$.
	
	(2) Suppose that for every $l\geq 1$, no label of the form $k+(l-1)n$ is present in 
	$\lambda_{w_{1}}(c)$, and there is a minimal index $j$ such that 
	$\lambda_{w_{1}}(x_{j-1},x_{j})=(k+1)+(l-1)n$.\quad By assumption and since $c$ is rising, we notice 
	that $\lambda_{w_{1}}(x_{j-2},x_{j-1})\leq k-1+(l-1)n$. Since $j$ is minimal, we conclude that
	$\lambda_{w_{2}}(x_{j-2},x_{j-1})=\lambda_{w_{1}}(x_{j-2},x_{j-1})$, and we have
	$\lambda_{w_{2}}(x_{j-1},x_{j})=\lambda_{w_{1}}(x_{j-1},x_{j})-1$. Thus, $c$ is still rising with 
	respect $\lambda_{w_{2}}$. We argue similarly if there exists another index $j'>j$ such that 
	$\lambda_{w_{1}}(x_{j'-1},x_{j'})=(k+1)+(l'-1)n$, for some $l'>l$.
	
	(3) Suppose that for every $l\geq 1$, no label of the form $k+(l-1)n$ or $(k+1)+(l-1)n$ is present 
	in $\lambda_{w_{1}}(c)$. Then, $\lambda_{w_{2}}(c)=\lambda_{w_{1}}(c)$.

	\smallskip
	
	The statement for falling chains can be shown analogously.
\end{proof}

Whenever we use an initial letter $s$ of $\gamma$ in the remainder of this article, we consider 
$\lambda_{\gamma}$ with respect to a fixed reduced word for $\gamma$ which has $s$ as its first letter. The 
previous lemma implies that this can be done without loss of generality.

\begin{lemma}
  \label{lem:i_0}
	Let $C_{\gamma}$ be a Cambrian semilattice, and let $u,v\in C_{\gamma}$ such that $u\leq_{\gamma}v$. 
	Let $i_0=\min\{i\mid i\in\alpha_{\gamma}(v)\smallsetminus\alpha_{\gamma}(u)\}$. Then 
	the following hold.
	\begin{enumerate}
		\item[(i)] The label $i_0$ appears in every maximal chain of the interval $[u,v]_{\gamma}$. 
		\item[(ii)] The labels of a maximal chain in $[u,v]_{\gamma}$ are distinct.
	\end{enumerate}
\end{lemma}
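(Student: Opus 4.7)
The plan is to derive both parts directly from the monotonicity statement of Lemma~\ref{lem:alpha} together with the definition of $\lambda_{\gamma}$ as a minimum of $\alpha_{\gamma}(v)\smallsetminus\alpha_{\gamma}(u)$. Fix a maximal chain $c:u=x_{0}\lessdot_{\gamma}x_{1}\lessdot_{\gamma}\cdots\lessdot_{\gamma}x_{t}=v$ in $[u,v]_{\gamma}$, write $\lambda_{j}=\lambda_{\gamma}(x_{j-1},x_{j})$, and observe that, since the order on $C_{\gamma}$ is the restriction of $\leq_{S}$ to $\gamma$-sortable elements, Lemma~\ref{lem:alpha} yields the chain of inclusions
\begin{displaymath}
	\alpha_{\gamma}(x_{0})\subseteq\alpha_{\gamma}(x_{1})\subseteq\cdots\subseteq\alpha_{\gamma}(x_{t}).
\end{displaymath}
Also, by definition, $\lambda_{j}\in\alpha_{\gamma}(x_{j})\smallsetminus\alpha_{\gamma}(x_{j-1})$ for every $j$. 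These two facts are the only ingredients I will need.

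For part~(ii), I will suppose for a contradiction that $\lambda_{j}=\lambda_{k}$ for some $j<k$ along $c$. The inclusion above gives $\lambda_{j}\in\alpha_{\gamma}(x_{j})\subseteq\alpha_{\gamma}(x_{k-1})$, whereas the defining property $\lambda_{k}\in\alpha_{\gamma}(x_{k})\smallsetminus\alpha_{\gamma}(x_{k-1})$ forces $\lambda_{j}=\lambda_{k}\notin\alpha_{\gamma}(x_{k-1})$, a contradiction. Hence all labels along $c$ are pairwise distinct.

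For part~(i), I will let $j$ be the smallest index with $i_{0}\in\alpha_{\gamma}(x_{j})$; such a $j$ exists and satisfies $1\leq j\leq t$ because $i_{0}\in\alpha_{\gamma}(v)\smallsetminus\alpha_{\gamma}(u)$. Then $i_{0}\in\alpha_{\gamma}(x_{j})\smallsetminus\alpha_{\gamma}(x_{j-1})$, so by definition $\lambda_{j}\leq i_{0}$. To exclude strict inequality, I will suppose $\lambda_{j}<i_{0}$ and derive a contradiction as follows: $\lambda_{j}\in\alpha_{\gamma}(x_{j})\subseteq\alpha_{\gamma}(v)$, and $\lambda_{j}\notin\alpha_{\gamma}(x_{j-1})\supseteq\alpha_{\gamma}(u)$, so $\lambda_{j}\in\alpha_{\gamma}(v)\smallsetminus\alpha_{\gamma}(u)$, contradicting the minimality of $i_{0}$. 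Therefore $\lambda_{j}=i_{0}$, and the label $i_{0}$ appears on every maximal chain of $[u,v]_{\gamma}$.

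I do not expect any real obstacles here: the whole argument is a formal consequence of Lemma~\ref{lem:alpha} and the definition of $\lambda_{\gamma}$, and in particular it does not require any deeper structural facts about $\gamma$-sortable elements or the Cambrian congruence. The only subtlety worth flagging is that a cover $x\lessdot_{\gamma}y$ in $C_{\gamma}$ need not be a cover in the weak order, so that $\alpha_{\gamma}(y)\smallsetminus\alpha_{\gamma}(x)$ may have size greater than one; but this is precisely why $\lambda_{\gamma}(x,y)$ is defined as a minimum, and the argument above uses only that this minimum lies in the displayed set difference.
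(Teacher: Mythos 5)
Your proof is correct and follows essentially the same route as the paper's: part~(ii) is the identical monotonicity argument via Lemma~\ref{lem:alpha}, and part~(i) uses the same contradiction with the minimality of $i_0$. If anything, your write-up of part~(i) is slightly more explicit than the paper's, which compresses the argument into the assertion that ``$i_0\in\alpha_{\gamma}(u)$ if and only if $i_0\in\alpha_{\gamma}(v)$''; you unfold exactly why no label strictly smaller than $i_0$ can occur at the step where $i_0$ first enters the $\alpha$-set, which is the content being implicitly invoked there. Your closing remark about covers in $C_{\gamma}$ not being covers in the weak order is also accurate and a good point to flag.
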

\begin{proof}
	(i) Suppose that this is not the case. Then there exists a maximal chain 
	$u=u_0\lessdot_{\gamma}u_1\lessdot_{\gamma}\cdots\lessdot_{\gamma}u_{k-1}\lessdot_{\gamma}u_k=v$
	with $\lambda_{\gamma}(u_i,u_{i+1})\neq i_0$ for every $i\in\{0,1,\dots, k-1\}$. Hence, $i_0\in\alpha_{\gamma}(u)$ 
	if and only if $i_0\in\alpha_{\gamma}(v)$, which contradicts the definition of $i_0$. 

	(ii) Let $u=c_{0}\lessdot_{\gamma}c_{1}\lessdot_{\gamma}\cdots\lessdot_{\gamma}c_{m}=v$ be a
	maximal chain in $[u,v]_{\gamma}$. Assume that there are $i,j\in\{1,2,\ldots,m\}$ with $i<j$
	such that $\lambda_{\gamma}(c_{i},c_{i+1})=k=\lambda_{\gamma}(c_{j},c_{j+1})$. By definition, $k\in\alpha_{\gamma}(c_{i+1})$,
	and $k\notin\alpha_{\gamma}(c_{j})$. Since $c_{i+1}\leq_{S} c_{j}$, we can conclude from
	Lemma~\ref{lem:alpha} that $\alpha_{\gamma}(c_{i+1})\subseteq\alpha_{\gamma}(c_{j})$, which yields a contradiction.
\end{proof}

The $\gamma$-sortable words of $W$ are defined recursively as described in
Proposition~\ref{prop:recursion}. Thus we need to investigate how our labeling behaves with respect to this 
recursion.

\begin{lemma}
   \label{lem:labeling_recursion}
	Let $W$ be a Coxeter group and let $\gamma\in W$ be a Coxeter element. For $u,v\in C_{\gamma}$ with
	$u\lessdot_{\gamma}v$ and for $s\in S$ initial in $\gamma$, we have
	\begin{displaymath}
		\lambda_{\gamma}(u,v)=
		  \begin{cases}
		  	  1, & \mbox{if}\;s\not\leq_{S}u\;\mbox{and}\;s\leq_{S}v,\\
		  	  \lambda_{s\gamma s}(su,sv)+1, & \mbox{if}\;s\leq_{S}u,\\
		  	  \lambda_{s\gamma}(u_{\langle s\rangle},v_{\langle s\rangle})+k, & \mbox{if}\;
		  	    s\not\leq_{S}v\;\mbox{and the first position where}\;u\;\mbox{and}\;v\\
		  	    & \mbox{differ is in their}\;k\mbox{-th block}.
		  \end{cases}
	\end{displaymath}
\end{lemma}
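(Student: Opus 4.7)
The plan is to handle the three cases of the recursion separately, each by unwinding definitions and comparing positions in $\gamma^\infty$ to positions in the infinite word associated with the smaller Coxeter element. By Lemma~\ref{lem:well_defined} I can freely choose convenient reduced words for $\gamma$, $s\gamma s$, and $s\gamma$; in particular I would fix a reduced word for $\gamma$ starting with $s$, say $\gamma=ss_{i_2}\cdots s_{i_n}$.

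For the first case ($s\not\leq_S u$ and $s\leq_S v$), position $1$ of $\gamma^\infty$ is $s$, so $1\notin\alpha_\gamma(u)$ while $1\in\alpha_\gamma(v)$; since $1$ is the minimum of $\mathbb{N}$, this immediately gives $\lambda_\gamma(u,v)=1$. For the second case ($s\leq_S u$, and hence $s\leq_S v$), I would use that the cyclic shift $s_{i_2}\cdots s_{i_n}s$ is a reduced word for $s\gamma s$, so that deleting the leading $s$ from $\gamma^\infty$ and regrouping yields exactly $(s\gamma s)^\infty$; this gives a bijection between position $i\geq 2$ of $\gamma^\infty$ and position $i-1$ of $(s\gamma s)^\infty$. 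Combined with Proposition~\ref{prop:recursion}, which tells me that $su$ and $sv$ are $s\gamma s$-sortable and that stripping the initial $s$ from the $\gamma$-sorting words of $u$ and $v$ yields the $s\gamma s$-sorting words of $su$ and $sv$, I obtain $\alpha_\gamma(u)=\{1\}\cup\{i+1:i\in\alpha_{s\gamma s}(su)\}$ and similarly for $v$. Since $1$ lies in both position sets, the minimum of the symmetric difference shifts by one, yielding $\lambda_\gamma(u,v)=\lambda_{s\gamma s}(su,sv)+1$.

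For the third case ($s\not\leq_S v$, hence also $s\not\leq_S u$), Proposition~\ref{prop:recursion} tells me that $u$ and $v$ lie in $W_{\langle s\rangle}$ and are $s\gamma$-sortable there, so no $s$'s appear in their $\gamma$-sorting words. With the reduced word $s_{i_2}\cdots s_{i_n}$ for $s\gamma$ in $W_{\langle s\rangle}$, the letter at position $(j-1)n+i$ of $\gamma^\infty$ (for $i\in\{2,\ldots,n\}$) agrees with the letter at position $(j-1)(n-1)+(i-1)$ of $(s\gamma)^\infty$, a shift of $j$. Consequently a position $p$ in $\alpha_\gamma(v)\smallsetminus\alpha_\gamma(u)$ lying in the $k$-th block corresponds to a position $p-k$ in $\alpha_{s\gamma}(v_{\langle s\rangle})\smallsetminus\alpha_{s\gamma}(u_{\langle s\rangle})$. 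Taking minima on both sides (and using that the two minima must lie in the same block, since all earlier blocks agree) yields $\lambda_\gamma(u,v)=\lambda_{s\gamma}(u_{\langle s\rangle},v_{\langle s\rangle})+k$.

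The proof is almost entirely bookkeeping, and the only real subtlety is the position correspondence between $\gamma^\infty$ and the smaller infinite words; this is where Lemma~\ref{lem:well_defined} is essential, since it allows me to replace the original reduced word for $\gamma$ with a cyclic rotation (Case 2) or with one that aligns $s$ as its first letter (Case 3) without affecting the relevant combinatorics of the labeling.
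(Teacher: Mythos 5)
Your proof is correct and takes essentially the same approach as the paper: all three cases of Proposition~\ref{prop:recursion} are handled by the same position bookkeeping, with your Case~2 and Case~3 computations being explicit versions of what the paper states more briefly (``the $s\gamma s$-sorting word of $su$ is precisely the subword of $u$ starting at the second position'' and ``each block has an additional first letter''). One small caveat on your opening sentence: Lemma~\ref{lem:well_defined} only guarantees that rising/falling chain \emph{counts} are reduced-word independent, not the individual labels that Lemma~\ref{lem:labeling_recursion} equates, so the equation in the lemma is understood (as you in fact do, and as the paper implicitly does via its convention stated just after Lemma~\ref{lem:well_defined}) with respect to the compatible cyclic-shift choices of reduced words for $\gamma$, $s\gamma s$, and $s\gamma$.
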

\begin{proof}
	Let first $s\not\leq_{S}u$ and $s\leq_{S}v$. By definition of the weak order, $s$ does not occur in 
	the first position of any reduced word for $u$, in particular it does not occur in the first position 
	of the $\gamma$-sorting word of $u$. Hence, $1\notin\alpha_{\gamma}(u)$. Since $s$ is initial in $\gamma$, it 
	does occur in the first position of the $\gamma$-sorting word of $v$, and hence $1\in\alpha_{\gamma}(v)$. By
	definition this implies $\lambda_{\gamma}(u,v)=1$.	
	
	Let now $s\leq_{S}u$. Then, $s\leq_{S}v$, and with Proposition~\ref{prop:recursion}, we find that
	$su$ and $sv$ are $s\gamma s$-sortable. It follows from \cite{reading11sortable}*{Proposition~2.18},
	Proposition~\ref{prop:recursion} and the definition of the weak order that 
	$su\lessdot_{s\gamma s}sv$. Say $\lambda_{s\gamma s}(su,sv)=k$. By construction, the 
	$s\gamma s$-sorting word of $su$ is precisely the subword of $u$ starting at the second position. 
	Thus, the $s\gamma s$-sorting word of $su$ is the leftmost subword of $\gamma^{\infty}$ where the 
	first position is empty, and likewise for $sv$. If the first position of $(s\gamma s)^{\infty}$ 
	where $su$ and $sv$ differ is $k$, then the first position of $\gamma^{\infty}$ where $u$ and $v$ 
	differ is $k+1$. Hence, $\lambda_{\gamma}(u,v)=\lambda_{s\gamma s}(su,sv)+1$.
	
	Finally, let $s\not\leq_{S}v$. Then, $s\not\leq_{S}u$, and with Proposition~\ref{prop:recursion}, we
	find that $u_{\langle s\rangle}$ and $v_{\langle s\rangle}$ are $s\gamma$-sortable words of the
	parabolic subgroup $W_{\langle s\rangle}$ of $W$, and the Cambrian lattice $C_{s\gamma}$ is an order 
	ideal in $C_{\gamma}$. Say that the first position filled in $v_{\langle s\rangle}$ but not in 
	$u_{\langle s\rangle}$ is in the $k$-th block of $v_{\langle s\rangle}$. Considering 
	$u_{\langle s\rangle}$ and $v_{\langle s\rangle}$ as subwords of $\gamma^{\infty}$ adds the letter 
	$s$ with exponent $0$ to each block of $u_{\langle s\rangle}$ and $v_{\langle s\rangle}$. Since the 
	first difference of $u_{\langle s\rangle}$ and $v_{\langle s\rangle}$ is in the $k$-th block, the 
	first difference of $u$ and $v$ is still in the $k$-th block, but each block has an additional first 
	letter. Hence $\lambda_{\gamma}(u,v)=\lambda_{s\gamma}(u_{\langle s\rangle},v_{\langle s\rangle})+k$.
\end{proof}

\begin{example}
	Let $W=B_{3}$ generated by $S=\{s_{1},s_{2},s_{3}\}$ satisfying
	$(s_{1}s_{2})^{3}=(s_{2}s_{3})^{4}=(s_{1}s_{3})^{2}=\varepsilon$ and 
	$s_{1}^{2}=s_{2}^{2}=s_{3}^{2}=\varepsilon$, and let $\gamma=s_{1}s_{2}s_{3}$ be a Coxeter element of 
	$B_{3}$. 
	
	Consider $u_{1}=s_{2}s_{3}\vert s_{2}s_{3}$ and 
	$v_{1}=s_{1}s_{2}s_{3}\vert s_{1}s_{2}s_{3}\vert s_{1}s_{2}s_{3}$. With the definition of our 
	labeling follows $\lambda_{\gamma}(u_{1},v_{1})=1$ immediately. 
	
	Let now $u_{2}=s_{1}s_{2}s_{3}\vert s_{1}s_{2}$ and 
	$v_{2}=s_{1}s_{2}s_{3}\vert s_{1}s_{2}s_{3}$. Then, $s_{1}u_{2}=s_{2}s_{3}s_{1}\vert s_{2}$ and
	$s_{1}v_{2}=s_{2}s_{3}s_{1}\vert s_{2}s_{3}$ considered as $s_{1}\gamma s_{1}$-sorting words. We have
	\begin{displaymath}
		\lambda_{s_{1}\gamma s_{1}}(s_{1}u_{2},s_{1}v_{2})=5,\quad\mbox{and}\quad
		\lambda_{\gamma}(u_{2},v_{2})=6.
	\end{displaymath}
	
	Finally, let $u_{3}=s_{2}s_{3}\vert s_{2}$ and $v_{3}=s_{2}s_{3}\vert s_{2}s_{3}$. The
	$(s_{1}\gamma)^{\infty}$-sorting words of $(u_{3})_{\langle s_{1}\rangle}$ and 
	$(v_{3})_{\langle s_{1}\rangle}$ written as in
	\eqref{eq:presentation} are 
	\begin{displaymath}
		(u_{3})_{\langle s_{1}\rangle}=s_{2}^{1}s_{3}^{1}\vert s_{2}^{1}s_{3}^{0},
		  \quad\mbox{and}\quad 
		(v_{3})_{\langle s_{1}\rangle}=s_{2}^{1}s_{3}^{1}\vert s_{2}^{1}s_{3}^{1}.
	\end{displaymath}
	The corresponding $\gamma$-sorting words of $u_{3}$ and $v_{3}$ are
	\begin{displaymath}
		u_{3}=s_{1}^{0}s_{2}^{1}s_{3}^{1}\vert s_{1}^{0}s_{2}^{1}s_{3}^{0},\quad\mbox{and}\quad 
		v_{3}=s_{1}^{0}s_{2}^{1}s_{3}^{1}\vert s_{1}^{0}s_{2}^{1}s_{3}^{1}.
	\end{displaymath}
	Hence, 
	$\lambda_{s_{1}\gamma}\bigl((u_{3})_{\langle s_{1}\rangle},(v_{3})_{\langle s_{1}\rangle}\bigr)=4$
	and $\lambda_{\gamma}(u_{3},v_{3})=6$.
\end{example}

\subsection{Proof of Theorem~\ref{thm:infinite_shellability}}
  \label{sec:proof}
We will prove Theorem~\ref{thm:infinite_shellability} by showing that the map $\lambda_{\gamma}$ defined in 
\eqref{eq:labeling} is an EL-labeling for every closed interval in $C_{\gamma}$. In particular we show the 
following.

\begin{theorem}
  \label{thm:el_labeling}
	Let $u,v\in C_{\gamma}$ with $u\leq_{\gamma}v$. Then the map $\lambda_{\gamma}$ defined in 
	\eqref{eq:labeling} is an EL-labeling for $[u,v]_{\gamma}$.
\end{theorem}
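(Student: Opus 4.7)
We plan to prove Theorem~\ref{thm:el_labeling} by strong induction on the pair $\bigl(|S|,\ell_{S}(v)\bigr)$ ordered lexicographically, combining the recursive characterization of $\gamma$-sortable elements in Proposition~\ref{prop:recursion} with the label recursion of Lemma~\ref{lem:labeling_recursion}. Fix an initial letter $s\in S$ of $\gamma$, and (by Lemma~\ref{lem:well_defined}) work with a reduced word for $\gamma$ starting with $s$. Three cases arise, mirroring the three cases of Lemma~\ref{lem:labeling_recursion}.

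Cases A and B will reduce directly to smaller Cambrian semilattices. In Case~A ($s\leq_{S}u$), the map $x\mapsto sx$ is a poset isomorphism $[u,v]_{\gamma}\to[su,sv]_{s\gamma s}$ (its image lies in $C_{s\gamma s}$ by Proposition~\ref{prop:recursion}(i)), and by Lemma~\ref{lem:labeling_recursion} every cover label shifts by the constant $+1$. Hence rising and lexicographically-first chains correspond bijectively, and induction closes the case since $\ell_{S}(sv)<\ell_{S}(v)$. In Case~B ($s\not\leq_{S}v$), both $u$ and $v$ lie in $W_{\langle s\rangle}$ and $[u,v]_{\gamma}=[u,v]_{s\gamma}$ as posets; each cover label transforms as $\lambda_{\gamma}(x,y)=\lambda_{s\gamma}(x_{\langle s\rangle},y_{\langle s\rangle})+k$, where $k$ is the block index of the first disagreement between $x$ and $y$. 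A short computation shows that this block-dependent shift preserves strict comparisons between labels (labels in block $k$ of $s\gamma$ remain in block $k$ of $\gamma$, and intra-block ordering is preserved), so the induction hypothesis applies to the smaller group $W_{\langle s\rangle}$.

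The substantive case is Case~C, where $s\not\leq_{S}u$ and $s\leq_{S}v$. Then $1=\min\bigl(\alpha_{\gamma}(v)\smallsetminus\alpha_{\gamma}(u)\bigr)$, so by Lemma~\ref{lem:i_0} the label~$1$ appears in every maximal chain of $[u,v]_{\gamma}$, and labels within any single maximal chain are distinct. Since $1$ is the smallest possible label overall, every rising maximal chain must begin with a cover $u\lessdot_{\gamma}u^{*}$ of label~$1$, equivalently with $s\leq_{S}u^{*}$. Granting that such a $u^{*}\in[u,v]_{\gamma}$ is unique (see below), the rest of the rising chain lies in $[u^{*},v]_{\gamma}$, where Case~A applies and the inductive hypothesis supplies a unique rising lex-first completion, all of whose labels strictly exceed~$1$. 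Concatenation yields the unique rising chain of $[u,v]_{\gamma}$. Lex-first then follows automatically: any maximal chain not beginning with $u\lessdot u^{*}$ opens with some label~$>1$ and is therefore lex-larger, while any maximal chain that does begin with $u\lessdot u^{*}$ is lex-first precisely when its tail in $[u^{*},v]_{\gamma}$ is lex-first, and by induction that tail coincides with the unique rising one.

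The main difficulty is the uniqueness of $u^{*}$ in Case~C. The join $u\vee_{\gamma}s$ exists in $C_{\gamma}$ by Theorem~\ref{thm:joins_meets} (since $v$ is a common upper bound of $u$ and $s$), and we expect $u^{*}=u\vee_{\gamma}s$ to be the desired unique cover. Verifying that this join really covers $u$ in $C_{\gamma}$ and that no other $\gamma$-sortable cover of $u$ carries $s$ in its $\gamma$-sorting word reduces, via Proposition~\ref{prop:recursion}(i), to showing that the $s\gamma s$-sortable element $y=su^{*}$ is uniquely determined by the conditions that $u\leq_{S}sy$ with nothing $\gamma$-sortable strictly between them and that $sy\leq_{\gamma}v$; we expect this to be handled by an explicit analysis of the $\gamma$-sorting words involved.
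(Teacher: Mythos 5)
Your case decomposition, induction scheme, and overall strategy coincide exactly with the paper's: your Case~A is the paper's case~(2a), Case~B is case~(1), and Case~C with the candidate $u^{*}=s\vee_{\gamma}u$ is case~(2b). The argument is on track up to the decisive point in Case~C, where you need to know that $s\vee_{\gamma}u$ \emph{covers} $u$ in $C_{\gamma}$ (not merely sits somewhere above it). There you write that you ``expect'' this to follow from an explicit analysis of the $\gamma$-sorting words, reducing it via Proposition~\ref{prop:recursion}(i) to a claim about the $s\gamma s$-sortable element $su^{*}$. This is a genuine gap, and it is the hard part of the theorem: the covering property is precisely the paper's Lemma~\ref{lem:joincov}, whose proof does not proceed by inspecting sorting words at all but by invoking substantial structural machinery from Reading and Speyer --- that the cover reflections of $z=s\vee_{\gamma}u$ equal $\{s\}\cup\mbox{cov}(u)$, that a $\gamma$-sortable element is determined by its cover reflections, that $\pi_{\downarrow}^{\gamma}$ is order-preserving, and that $u=z_{\langle s\rangle}$, finishing with the inversion-set description of the weak order. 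Nothing in your sketch supplies these inputs, and a direct sorting-word computation is not a plausible substitute; without it Case~C does not close.

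Two smaller remarks. First, once $u\lessdot_{\gamma}u_{1}=s\vee_{\gamma}u$ is established, your claim that it is the \emph{only} cover of $u$ in $[u,v]_{\gamma}$ with label~$1$ follows quickly (any such cover $u'$ has $s\leq_{\gamma}u'$, hence $u'\geq_{\gamma}u_{1}$, hence $u'=u_{1}$), and your lex-first and uniqueness arguments via Lemma~\ref{lem:i_0} then match the paper's. Second, in Case~B your assertion that the block-dependent shift $+k$ preserves strict comparisons deserves the one-line justification the paper gives (rising forces the block index to be weakly increasing, so Lemma~\ref{lem:labeling_recursion} translates rising chains to rising chains), but this is minor and easily repaired. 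The real missing piece is the proof of Lemma~\ref{lem:joincov}.
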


We notice in view of Lemma~\ref{lem:well_defined} that the statement of Theorem~\ref{thm:el_labeling} does
not depend on a reduced word for $\gamma$, even though our labeling does.

For the proof of Theorem~\ref{thm:el_labeling}, we need one more technical lemma. This lemma uses many of the
deep results on Cambrian semilattices developed in \cite{reading11sortable}, and needs the following 
alternative characterization of the (right) weak order on $W$. Let $T=\{wsw^{-1}\mid w\in W, s\in S\}$, and 
define for $w\in W$, the \emph{(left) inversion set of $w$} as 
$\mbox{inv}(w)=\{t\in T\mid\ell_{S}(tw)\leq\ell_{S}(w)\}$. It is the statement of 
\cite{bjorner05combinatorics}*{Proposition~3.1.3} that $u\leq_{S}v$ if and only if 
$\mbox{inv}(u)\subseteq \mbox{inv}(v)$. Thus, every $w\in W$ is uniquely determined by its inversion set, 
and for $J\subseteq S$ the map $w\mapsto w_{J}$ is defined by the property that 
$\mbox{inv}(w_{J})=\mbox{inv}(w)\cap W_{J}$, see \cite[Section~2.4]{reading11sortable}.

\begin{lemma}
  \label{lem:joincov}
	Let $u,v\in C_{\gamma}$ with $u\leq_{\gamma}v$ and let $s$ be initial in $\gamma$. If
	$s\not\leq_{\gamma}u$ and $s\leq_{\gamma}v$, then the join $s\vee_{\gamma}u$ covers $u$ in 
	$C_{\gamma}$.
\end{lemma}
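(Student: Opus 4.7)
My plan is to show that the interval $[u,y]_\gamma$, where $y:=s\vee_\gamma u$, contains only its two endpoints. First I would note that the join $s\vee_\gamma u$ is well-defined and coincides with the weak-order join $s\vee_S u$: since $v$ is an upper bound of $\{s,u\}$, Theorem~\ref{thm:joins_meets} applies. Then I would pick an arbitrary $w\in C_\gamma$ with $u\leq_\gamma w\leq_\gamma y$ and split into two cases according to whether $s\leq_\gamma w$.

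If $s\leq_\gamma w$, then $w$ is a common upper bound of $s$ and $u$ in $C_\gamma$, so $w\geq_\gamma s\vee_\gamma u=y$, which forces $w=y$. If $s\not\leq_\gamma w$, then Proposition~\ref{prop:recursion}(ii) implies that $w\in W_{\langle s\rangle}$. Using the characterization $\mathrm{inv}(w_J)=\mathrm{inv}(w)\cap W_J$ recalled from the paper together with $w\leq_S y$, I would derive
\begin{displaymath}
	\mathrm{inv}(w)=\mathrm{inv}(w)\cap W_{\langle s\rangle}\subseteq\mathrm{inv}(y)\cap W_{\langle s\rangle}=\mathrm{inv}(y_{\langle s\rangle}),
\end{displaymath}
so $w\leq_S y_{\langle s\rangle}$. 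The proof then reduces to the key claim $y_{\langle s\rangle}=u$: combined with $u\leq_\gamma w$ this yields $w=u$, finishing the case analysis.

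For the key claim, the inclusion $u\leq_S y_{\langle s\rangle}$ is immediate because $u\in W_{\langle s\rangle}$ and $u\leq_S y$. For the reverse inclusion, my plan is to use the fact that the inversion set of a weak-order join equals the smallest biclosed subset of the positive roots containing the union of the two inversion sets, so $\mathrm{inv}(y)$ is generated by $\mathrm{inv}(u)\cup\{\alpha_s\}$. Any reflection in $\mathrm{inv}(y)$ that lies in $W_{\langle s\rangle}$ corresponds to a positive root not involving the simple root $\alpha_s$; such a root can only arise in the closure from roots in $\mathrm{inv}(u)$ alone, and since $\mathrm{inv}(u)$ is itself biclosed, the reflection already lies in $\mathrm{inv}(u)$. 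Hence $\mathrm{inv}(y)\cap W_{\langle s\rangle}\subseteq\mathrm{inv}(u)$, as required.

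The main obstacle is exactly the key claim $y_{\langle s\rangle}=u$; the root-theoretic closure step appeals to standard machinery for joins in the weak order of possibly infinite Coxeter groups, and is the place where the deep Reading--Speyer results enter. A more internal alternative would be to use Proposition~\ref{prop:recursion} to describe the $\gamma$-sorting word of $y$ recursively from that of $u$, roughly by inserting $s$ into the first block and propagating as forced by sortability, and then to observe that dropping all $s$'s recovers the sorting word of $u$. That route avoids root-system reasoning but is notationally heavier, because one must track carefully how the nesting condition $b_1\supseteq b_2\supseteq\cdots$ is maintained after the insertion.
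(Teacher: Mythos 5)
Your overall structure is valid and, in fact, more direct than the paper's: you show $[u,y]_\gamma$ contains only its endpoints by case analysis on whether $s\leq_\gamma w$, whereas the paper constructs the element $\pi_\downarrow^\gamma(sz)$ covered by $z=s\vee_\gamma u$ and shows it equals $u$. Both arguments hinge on the same key claim, namely that restricting $y=s\vee_\gamma u$ to the parabolic $W_{\langle s\rangle}$ recovers $u$ (your $y_{\langle s\rangle}=u$, the paper's $z_{\langle s\rangle}=u$). Where you diverge is in how this claim is proved, and that is where the gap lies.

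The paper establishes $z_{\langle s\rangle}=u$ entirely inside the Reading--Speyer framework: it cites their Lemma~2.23 to get $\mathrm{cov}(z)=\{s\}\cup\mathrm{cov}(u)$, then their Propositions~5.4 and Theorem~8.9(iv) to conclude $z_{\langle s\rangle}=u$ from equality of cover-reflection sets. You instead invoke the identification of $\mathrm{inv}(y)$ with the smallest biclosed set containing $\mathrm{inv}(u)\cup\{\alpha_s\}$ and argue that any $W_{\langle s\rangle}$-root in this set must already lie in $\mathrm{inv}(u)$. The additive part of your argument is fine: a positive combination of positive roots has zero $\alpha_s$-coefficient only if every summand does, so the \emph{closure} (taking positive sums) of $\mathrm{inv}(u)\cup\{\alpha_s\}$ meets $W_{\langle s\rangle}$ exactly in $\mathrm{inv}(u)$. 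But the smallest \emph{biclosed} set is not in general obtained just by taking additive closure: one must also enforce that the complement is closed, and repairing coclosedness can force adding further roots, which a priori may lie in $W_{\langle s\rangle}$. Concretely, a coclosedness violation of the form $a\alpha+b\beta\in C$, $\alpha,\beta\notin C$, with $\alpha\notin W_{\langle s\rangle}$ and $\beta\in W_{\langle s\rangle}\smallsetminus\mathrm{inv}(u)$, would force adding $\alpha$ or $\beta$, and you have not ruled out the possibility that the minimal biclosed extension adds such a $\beta$. Your phrase ``such a root can only arise in the closure from roots in $\mathrm{inv}(u)$ alone'' silently identifies the biclosed closure with the additive closure, which is exactly the unjustified step. (Separately, the ``fact'' that $\mathrm{inv}(y)$ is the smallest biclosed set containing the union in an arbitrary, possibly infinite, Coxeter group is itself a nontrivial assertion that would need a precise citation; the paper avoids this entirely.)

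The conclusion $y_{\langle s\rangle}=u$ is of course correct, and your alternative sketch via the recursion for sorting words is much closer in spirit to the paper's route. If you want to keep the clean two-case structure of your proof while avoiding root-theoretic closures, the simplest repair is to import the paper's cover-reflection argument (or the Reading--Speyer lemmas it cites) to establish the key claim, and then run your case analysis as written.
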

\begin{proof}
	First of all, since $s\leq_{\gamma}v$ and $u\leq_{\gamma}v$, we conclude from 
	Theorem~\ref{thm:joins_meets} that $s\vee_{\gamma} u$ exists, and set $z=s\vee_{\gamma}u$. 
	By assumption, we have $u=\pi_{\downarrow}^{s\gamma}(u_{\langle s\rangle})\in W_{\langle s\rangle}$, 
	and Proposition~\ref{prop:recursion} implies $u=u_{\langle s\rangle}$.  
	We deduce from \cite{reading11sortable}*{Lemma~2.23} that $\mbox{cov}(z)=\{s\}\cup\mbox{cov}(u)$.
	Therefore $s$ is a cover reflection of $z$, thus it follows from 
	\cite{reading11sortable}*{Proposition~5.4~(i)} that $z=s\vee_{\gamma}z_{\langle s\rangle}$, and
	\cite{reading11sortable}*{Proposition~5.4~(ii)} implies that 
	$\mbox{cov}(z)=\{s\}\cup\mbox{cov}(z_{\langle s\rangle})$. Hence, 
	$\mbox{cov}(u)=\mbox{cov}(z_{\langle s\rangle})$, and \cite{reading11sortable}*{Theorem~8.9~(iv)} 
	implies $u=z_{\langle s\rangle}$. (The required fact that $z_{\langle s\rangle}$ is $\gamma$-sortable
	follows from \cite{reading11sortable}*{Propositions~3.13 and 6.10}.)
	
	On the other hand, it follows from the definition of a cover reflection that there exists an element 
	$z'=sz\in W$ with $z'\lessdot_{S}z$. In view of \cite{bjorner05combinatorics}*{Proposition~3.1.3}, 
	we conclude that $\mbox{inv}(z')\subseteq \mbox{inv}(z)$. Hence, we have $\mbox{inv}(z'_{\langle s\rangle})=
	\mbox{inv}(z')\cap W_{\langle s\rangle}\subseteq \mbox{inv}(z)\cap W_{\langle s\rangle}=\mbox{inv}(z_{\langle s\rangle})$,
	which implies that $z'_{\langle s\rangle}\leq_{S}z_{\langle s\rangle}$.
	Furthermore we have $\mbox{inv}(z')=\mbox{inv}(z)\setminus\{s\}$, and since $\mbox{inv}(s)=\{s\}$, 
	Proposition~3.1.3 in \cite{bjorner05combinatorics} implies $s\not\leq_{S} z'$. Hence, 
	by definition of $\pi_{\downarrow}^{\gamma}$, see \eqref{eq:projection}, we have 
	$\pi_{\downarrow}^{\gamma}(z')=
	\pi_{\downarrow}^{s\gamma}(z'_{\langle s\rangle})\in W_{\langle s\rangle}$, and 
	$\pi_{\downarrow}^{\gamma}(z')\lessdot_{\gamma}z$. Since 
	$\pi_{\downarrow}^{s\gamma}$ is order-preserving (see Theorem~\ref{thm:projection_homomorphism}), we 
	conclude from $z'_{\langle s\rangle}\leq_{S}z_{\langle s\rangle}$ that
	$\pi_{\downarrow}^{s\gamma}(z'_{\langle s\rangle})\leq_{S}
	\pi_{\downarrow}^{s\gamma}(z_{\langle s\rangle})$. Hence, 
	\begin{displaymath}
		\pi_{\downarrow}^{\gamma}(z')=\pi_{\downarrow}^{s\gamma}(z'_{\langle s\rangle})\leq_{S}
		  \pi_{\downarrow}^{s\gamma}(z_{\langle s\rangle})=\pi_{\downarrow}^{s\gamma}(u)
		    =\pi_{\downarrow}^{s\gamma}(u_{\langle s\rangle})
		      =\pi_{\downarrow}^{\gamma}(u)=u.
	\end{displaymath}
	Since $\pi_{\downarrow}^{\gamma}(z')\lessdot_{\gamma}z$ and $u<_{\gamma}z$, the previous implies
	$u=\pi_{\downarrow}^{\gamma}(z')$ and thus $u\lessdot_{\gamma}z$.
\end{proof}

\begin{proof}[Proof of Theorem~\ref{thm:el_labeling}]
	Let $[u,v]_{\gamma}$ be a closed interval of $C_{\gamma}$. Since the weak order on $W$ is finitary,
	it follows that $[u,v]_{\gamma}$ is a finite lattice. We show that there 
	exists a unique maximal rising chain which is the lexicographically first among all maximal chains 
	in this interval.
	
	We proceed by induction on length and rank, using the recursive structure of $\gamma$-sortable words, 
	see Proposition~\ref{prop:recursion}. We assume that $\ell_{S}(v)\geq 3$, and that $W$ is a 
	Coxeter group of rank $\geq 2$, since the result is trivial otherwise. Say that $W$ is of 
	rank $n$, and say that $\ell_{S}(v)=k$. Suppose that the induction hypothesis is true for 
	all parabolic subgroubs of $W$ having rank $<n$ and suppose that for every closed interval 
	$[u',v']_{\gamma}$ of $C_{\gamma}$ with $\ell_{S}(v')<k$, there exists a unique rising maximal chain 
	from $u'$ to $v'$ which is lexicographically first among all maximal chains in $[u',v']_{\gamma}$. 
	We show that there is a unique rising maximal chain in the interval $[u,v]_{\gamma}$ wich is 
	lexicographically first among all maximal chains in $[u,v]_{\gamma}$. For $s$ initial in $\gamma$, 
	we distinguish two cases: (1) $s\not\leq_{\gamma} v$ and (2) $s\leq_{\gamma} v$. 

	(1) Since $s\not\leq_{\gamma} v$, it follows that no element of $[u,v]_{\gamma}$ contains the 
	letter $s$ in its $\gamma$-sorting word. We consider the parabolic Coxeter group 
	$W_{\langle s\rangle}$ (generated by $S\setminus\{s\}$) and the Coxeter element 
	$s\gamma$. It follows from Proposition~\ref{prop:recursion} that the interval 
	$[u,v]_{\gamma}$ is isomorphic to the interval 
	$[u_{\langle s\rangle},v_{\langle s\rangle}]_{s\gamma}$ in $W_{\langle s\rangle}$. 
	Since the rank of $W_{\langle s\rangle}$ is $n-1<n$, by induction there exists a unique 
	maximal rising chain $c':u_{\langle s\rangle}=(x_{0})_{\langle s\rangle}
	\lessdot_{s\gamma}(x_{1})_{\langle s\rangle}\lessdot_{s\gamma}\cdots
	\lessdot_{s\gamma}(x_{t})_{\langle s\rangle}=v_{\langle s\rangle}$ which is lexicographically 
	first among all maximal chains in $[u_{\langle s\rangle},v_{\langle s\rangle}]_{s\gamma}$. Let 
	$(x_{j_{a}})_{\langle s\rangle}\lessdot_{s\gamma}(x_{j_{a}+1})_{\langle s\rangle}$ and 
	$(x_{j_{b}})_{\langle s\rangle}\lessdot_{s\gamma}(x_{j_{b}+1})_{\langle s\rangle}$ be two 
	covering relations in $c'$ with $j_{a}+1\leq j_{b}$. Say that the first block where 
	$(x_{j_{a}})_{\langle s\rangle}$ and $(x_{j_{a}+1})_{\langle s\rangle}$ differ is the $d_{a}$-th 
	block of their $s\gamma$-sorting word and say that the first block where 
	$(x_{j_{b}})_{\langle s\rangle}$ and $(x_{j_{b}+1})_{\langle s\rangle}$ differ is the $d_{b}$-th 
	block of their $s\gamma$-sorting word. Since $c'$ is rising, we conclude that $d_{a}\leq d_{b}$,
	and Lemma~\ref{lem:labeling_recursion} implies that the corresponding maximal chain
	$c:u=x_{0}\lessdot_{\gamma}x_{1}\lessdot_{\gamma}\cdots\lessdot_{\gamma} x_{t}=v$ in 
	$[u,v]_{\gamma}$ is rising. Similarly, it follows that $c$ is the unique maximal rising chain and 
	that it is lexicographically first among all maximal chains in $[u,v]_{\gamma}$.

	(2a) Suppose first that $s\leq_{\gamma} u$ as well. Then, $s$ is the first letter in the 
	$\gamma$-sorting word of every element in $[u,v]_{\gamma}$. It follows from 
	\cite[Proposition~2.18]{reading11sortable} and Proposition \ref{prop:recursion}
	that the interval $[u,v]_{\gamma}$ is isomorphic to the interval $[su,sv]_{s\gamma s}$.
	Moreover, Lemma~\ref{lem:labeling_recursion} implies that for a covering relation 
	$x\lessdot_{\gamma}y$ in $[u,v]_{\gamma}$ we have 
	$\lambda_{\gamma}(x,y)=\lambda_{s\gamma s}(sx,sy)+1$. Say that $c':su=sx_{0}
	\lessdot_{s\gamma s}sx_{1}\lessdot_{s\gamma s}\cdots\lessdot_{s\gamma s}sx_{t}=sv$ is
	the unique rising maximal chain in $[su,sv]_{s\gamma s}$. (This chain exists by induction, since 
	$\ell_{S}(sv)<\ell_{S}(v)$.) Then, the chain 
	$c:u=x_{0}\lessdot_{\gamma}x_{1}\lessdot_{\gamma}\cdots\lessdot_{\gamma}x_{t}=v$ is a 
	maximal chain in $[u,v]_{\gamma}$ and clearly rising. With Lemma~\ref{lem:labeling_recursion}, 
	we find that $c$ is the unique rising chain and every other maximal chain in $[u,v]_{\gamma}$ is 
	lexicographically larger than $c$. 

	(2b) Suppose now that $s\not\leq_{\gamma} u$. Since $s\leq_{\gamma} v$ and $u\leq_{\gamma}v$ the 
	join $u_{1}=s\vee_{\gamma} u$ exists and lies in $[u,v]_{\gamma}$. Lemma~\ref{lem:joincov} implies 
	that $u\lessdot_{\gamma} u_{1}$. Consider the interval $[u_1,v]_{\gamma}$. Then 
	$s\leq_{\gamma} u_1$ and analogously to (2a) we can find a unique maximal rising chain 
	$c':u_1=x_1\lessdot_{\gamma} x_2\lessdot_{\gamma}\cdots\lessdot_{\gamma}x_t=v$ in 
	$[u_1,v]_{\gamma}$ which is lexicographically first. Moreover, 
	$\min\{i\mid i\in\alpha_{\gamma}(v)\smallsetminus\alpha_{\gamma}(u_1)\}>1$, since 
	$s\leq_{\gamma}u_{1}\leq_{\gamma}v$. By definition of our labeling, the label $1$ cannot 
	appear as a label in any chain in the interval $[u_{1},v]_{\gamma}$. 
	On the other hand, it follows from Lemma~\ref{lem:labeling_recursion} that 
	$\lambda_{\gamma}(u,u_1)=1$. Thus, the chain $c:u=x_0\lessdot_{\gamma}x_1
	\lessdot_{\gamma}x_2\lessdot_{\gamma}\cdots\lessdot_{\gamma}x_t=v$ is maximal
	and rising in $[u,v]_{\gamma}$. Suppose that there is another element $u'$ that covers $u$ 
	in $[u,v]_{\gamma}$ such that $\lambda_{\gamma}(u,u')=1$. Then, by definition of 
	$\lambda_{\gamma}$, it follows that $s$ appears in the $\gamma$-sorting word of $u'$. In 
	particular, since $s$ is initial in $\gamma$, we deduce that $s\leq_{\gamma} u'$. Therefore 
	$u'$ is above both $s$ and $u$ in $C_{\gamma}$. By the uniqueness of joins and the 
	definition of $u_1$ it follows that $u_1=u'$. Thus $c$ is the lexicographically 
	smallest maximal chain in $[u,v]_{\gamma}$. Finally, Lemma~\ref{lem:i_0} implies that $c$ is the 
	unique maximal rising chain.
\end{proof}

\begin{remark}
	In the case where $W$ is finite and crystallographic, Colin Ingalls and Hugh Thomas have shown that
	$C_{\gamma}$ is trim. Trimness is a lattice property that generalizes distributivity to ungraded
	lattices. Then, by definition of trimness, it follows that $C_{\gamma}$ is left-modular, meaning 
	that there exists a maximal chain 
	$c:x_{1}\lessdot_{\gamma} x_{2}\lessdot_{\gamma}\cdots\lessdot_{\gamma} x_{n}$ satisfying
	$(y\vee_{\gamma} x_{i})\wedge_{\gamma} z = y\vee_{\gamma}(x_{i}\wedge_{\gamma}z)$,
	for all $y<_{\gamma}z$ and $i\in\{1,2,\ldots,n\}$. According to \cite{liu99left}, this property
	yields another EL-labeling of $C_{\gamma}$, defined by
	\begin{displaymath}
		\xi(y,z)=\min\{i\mid y\vee_{\gamma} x_{i}\wedge_{\gamma} z=z\},
	\end{displaymath}
	for all $y,z\in L$ with $y\lessdot_{\gamma}z$. It is not hard to show that this labeling is 
	structurally different from our labeling.
\end{remark}

\begin{proof}[Proof of Theorem~\ref{thm:infinite_shellability}]
	This follows by definition from Theorem~\ref{thm:el_labeling}. 
\end{proof}

\begin{remark}
  \label{rem:tree}
	In the case where $W$ is finite, \cite{reading07clusters}*{Remark~2.1}, states that the 
	$\gamma$-sortable elements constitute a spanning tree of the Hasse diagram of $C_{\gamma}$, which is 
	rooted at the identity. The edges of this spanning tree correspond to covering relations 
	$u\lessdot_{\gamma}v$ in $C_{\gamma}$ such that $u$ is a prefix of $v$. This spanning tree is 
	related to the labeling $\lambda_{\gamma}$ in the following way: let $w\in W$, with $\ell_{S}(w)=k$, 
	and let $(i_{0},i_{1},\ldots,i_{k-1})$ be the sequence of edge-labels of the unique rising chain in 
	$[\varepsilon,w]_{\gamma}$. In view of Theorem~\ref{thm:el_labeling}, and 
	\cite{reading07clusters}*{Remark~2.1}, we notice that the unique path from $\varepsilon$ to $w$ in 
	the spanning tree of $C_{\gamma}$ corresponds to the unique rising chain in 
	$[\varepsilon,w]_{\gamma}$. Hence, the $\gamma$-sorting word of $w$ is 
	$s_{i_{0}}s_{i_{1}}\cdots s_{i_{k-1}}$, where $s_{i_{j}}$ is the $i_{j}$-th letter of 
	$\gamma^{\infty}$, and the length of the rising chain in $[\varepsilon,w]_{\gamma}$ is
	precisely $\ell_{S}(w)$. Moreover, it follows from the proof of Theorem~\ref{thm:el_labeling} 
	that the length of the unique rising chain in an interval $[u,v]_{\gamma}$ equals 
	$\ell_S(v)-\ell_S(u)$. 
	
	In view of Theorem~\ref{thm:el_labeling}, we can carry out the same construction even in the case of 
	infinite Coxeter groups. 
\end{remark}

\section{Applications}
  \label{sec:applications}
In \cite{reading05lattice}, Nathan Reading investigated, among others, the topological properties of open
intervals in so-called \emph{fan posets}. A fan poset is a certain partial order defined on the maximal 
cones of a complete fan of regions of a real hyperplane arrangement. For a finite Coxeter group $W$ and a
Cambrian congruence $\theta$, the \emph{Cambrian fan $\mathcal{F}_{\theta}$} is the complete fan induced by 
certain cones in the Coxeter arrangement $\mathcal{A}_{W}$ of $W$. More precisely, each such cone is a 
union of regions of $\mathcal{A}_{W}$ which correspond to elements of $W$ lying in the same congruence 
class of $\theta$. It is the assertion of \cite{reading05lattice}*{Theorem~1.1}, that a Cambrian lattice of 
$W$ is the fan poset associated to the corresponding Cambrian fan. The following theorem is a concatenation
of \cite{reading05lattice}*{Theorem~1.1} and \cite{reading05lattice}*{Propositions~5.6~and~5.7}. In fact, 
Propositions~5.6 and 5.7 in \cite{reading05lattice} imply this result for a much larger class of fan posets. 

\begin{theorem}
  \label{thm:finite_topology}
	Let $W$ be a finite Coxeter group and let $\gamma\in W$ be a Coxeter element. Every open interval in 
	the Cambrian lattice $C_{\gamma}$ is either contractible or spherical.
\end{theorem}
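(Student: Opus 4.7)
The plan is to combine Theorem~\ref{thm:el_labeling} with Theorem~5.9 of \cite{bjorner96shellable}, which asserts that the order complex of the proper part of an EL-shellable bounded poset is homotopy equivalent to a wedge of spheres, contributing one sphere of dimension $d-2$ for each falling maximal chain of length $d$. Thus to prove the first assertion of Theorem~\ref{thm:cambrian_topology} it suffices to show that every finite closed interval $[u,v]_\gamma$ carries at most one falling maximal chain: if none, the open interval is contractible; if exactly one, it is spherical of the corresponding dimension.

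To prove this uniqueness, I would induct on the pair (rank of $W$, $\ell_S(v)-\ell_S(u)$), using Proposition~\ref{prop:recursion} and the recursive description of $\lambda_\gamma$ from Lemma~\ref{lem:labeling_recursion}. Fix $s\in S$ initial in $\gamma$. If $s\leq_S u$, multiplication by $s$ identifies $[u,v]_\gamma$ with $[su,sv]_{s\gamma s}$ and shifts all labels by $+1$, so falling chains biject and induction on length closes this case. If $s\not\leq_S v$, then $[u,v]_\gamma$ sits inside $W_{\langle s\rangle}$ and is isomorphic to $[u_{\langle s\rangle},v_{\langle s\rangle}]_{s\gamma}$, where the rank has strictly dropped. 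The subtle case is $s\not\leq_S u$ and $s\leq_S v$: since $1\in\alpha_\gamma(v)\setminus\alpha_\gamma(u)$, Lemma~\ref{lem:i_0} forces the label $1$ to appear in every maximal chain, and being the smallest possible label it must occur as the \emph{final} label of any falling chain. Hence every falling chain of $[u,v]_\gamma$ decomposes as a falling chain of some subinterval $[u,x]_\gamma$ followed by a covering $x\lessdot_\gamma v$ of label $1$, where $s\not\leq_S x$. I would then show that $v$ admits a unique cover $x$ from below with $s\not\leq_S x$, namely the top element of $[u,v]_\gamma\cap W_{\langle s\rangle}$; the subinterval $[u,x]_\gamma$ then falls under the previous case $s\not\leq_S v$ and carries at most one falling chain by induction, yielding the desired uniqueness.

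The characterization of spherical intervals will follow from a separate auxiliary theorem (to be formulated as Theorem~\ref{thm:mobius_nuclear}) asserting that a finite closed interval of $C_\gamma$ has nonzero M\"obius value if and only if it is nuclear. Combined with Proposition~\ref{prop:mobius} and the uniqueness just established, $\mu(u,v)\neq 0$ translates precisely into the existence of a (unique) falling chain, and hence into $(u,v)_\gamma$ being spherical. To prove this auxiliary theorem I would again run the three-case recursion. In the delicate case $s\not\leq_S u$, $s\leq_S v$, Lemma~\ref{lem:joincov} produces the atom $s\vee_\gamma u$ of $[u,v]_\gamma$; the remaining atoms should biject with the atoms of $[u,x]_\gamma$, so that $[u,v]_\gamma$ is nuclear if and only if $[u,x]_\gamma$ is nuclear. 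I expect the principal obstacle to be exactly this atom-level bookkeeping in the third case: verifying that the distinguished cover $x\lessdot_\gamma v$ interacts correctly with joins in $C_\gamma$, that every atom of $[u,v]_\gamma$ other than $s\vee_\gamma u$ lies below $x$, and that joining such atoms recovers $x$ precisely when the inductive hypothesis applies. This step will rely heavily on the finer structural results of \cite{reading11sortable} concerning cover reflections and the join description of $C_\gamma$, in particular the interplay between $\pi_\downarrow^\gamma$ and joins already used in the proof of Lemma~\ref{lem:joincov}.
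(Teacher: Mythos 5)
Your proposal is correct, but it is not the paper's proof of this specific statement. Theorem~\ref{thm:finite_topology} is not proved in the paper at all: it is quoted as a concatenation of \cite{reading05lattice}*{Theorem~1.1} (a Cambrian lattice is a fan poset of the Cambrian fan) with \cite{reading05lattice}*{Propositions~5.6 and~5.7}, which establish the contractible/spherical dichotomy for open intervals in fan posets of hyperplane arrangements. Reading's argument is geometric and relies on the ambient real vector space, which is why the paper invokes it only in the finite case. What you have written out instead is essentially the paper's own proof of the \emph{generalization} Theorem~\ref{thm:cambrian_topology}: apply the EL-labeling of Theorem~\ref{thm:el_labeling} together with \cite{bjorner96shellable}*{Theorem~5.9}, prove by a three-case induction on rank and length (mirroring Proposition~\ref{prop:recursion} and Lemma~\ref{lem:labeling_recursion}) that a closed interval carries at most one falling maximal chain (the paper's Theorem~\ref{thm:cambrian_mobius}), and then characterize when exactly one exists via the nuclear condition (the paper's Lemma~\ref{lem:nuclear_sub} and Theorem~\ref{thm:mobius_nuclear}). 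Your handling of the delicate case $s\not\leq_S u$, $s\leq_S v$ is, if anything, slightly more explicit than the paper's: you note that the label $1$ must occur and, by distinctness of labels (Lemma~\ref{lem:i_0}), must be terminal in any falling chain, and you isolate the uniqueness of the lower cover $x$ of $v$ with $s\not\leq_S x$ (namely $v_{\langle s\rangle}$), a fact the paper uses implicitly when asserting that falling chains of $[u,v]_\gamma$ restrict to $[u,v_1]_\gamma$. The trade-off is the expected one: Reading's fan-poset route gives the finite statement quickly for a broader class of posets but does not extend to infinite $W$, whereas your combinatorial route is uniform in the Coxeter group and gives the stronger Theorem~\ref{thm:cambrian_topology} at the cost of the inductive bookkeeping you correctly identify as the main obstacle.
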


It is well-known that the reduced Euler characteristic of the order complex
of an open interval $(x,y)$ in a poset determines $\mu(x,y)$, see for instance
\cite{stanley01enumerative}*{Proposition~3.8.6}. Hence, it follows immediately from
Theorem~\ref{thm:finite_topology} that for $\gamma$-sortable elements $x$ and $y$ in a finite Coxeter 
group $W$ satisfying $x\leq_{\gamma} y$, we have $\lvert\mu(x,y)\rvert\leq 1$, as was already remarked in 
\cite{reading06cambrian}*{pp.\;4-5}. In light of Proposition~\ref{prop:mobius} and 
Theorem~\ref{thm:el_labeling}, we can extend this statement to compute the M\"obius function of closed 
intervals in the Cambrian semilattice $C_{\gamma}$, by counting the falling maximal chains with respect to 
the labeling defined in \eqref{eq:labeling}, as our next theorem shows.

\begin{theorem}
  \label{thm:cambrian_mobius}
	Let $W$ be a (possibly infinite) Coxeter group and $\gamma\in W$ a Coxeter element. For 
	$u,v\in C_{\gamma}$ with $u\leq_{\gamma} v$, we have $|\mu(u,v)|\leq 1$. 
\end{theorem}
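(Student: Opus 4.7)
The plan is to combine Proposition~\ref{prop:mobius} with the EL-labeling of Theorem~\ref{thm:el_labeling} and reduce the claim to the statement that every closed interval $[u,v]_{\gamma}$ contains \emph{at most one} falling maximal chain. Since the weak order is graded by $\ell_{S}$, all maximal chains in $[u,v]_{\gamma}$ share the common length $\ell_{S}(v)-\ell_{S}(u)$, so as soon as such a uniqueness statement is established, Proposition~\ref{prop:mobius} gives $|\mu(u,v)|\leq 1$ immediately (the even-length and odd-length falling chain counts cannot both be positive).

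For the uniqueness step, I first examine the multiset of edge-labels along an arbitrary maximal chain $u=x_{0}\lessdot_{\gamma}x_{1}\lessdot_{\gamma}\cdots\lessdot_{\gamma}x_{m}=v$. Because a $\gamma$-sorting word is reduced and $\ell_{S}(x_{i+1})=\ell_{S}(x_{i})+1$, we have $|\alpha_{\gamma}(x_{i+1})|-|\alpha_{\gamma}(x_{i})|=1$; combined with Lemma~\ref{lem:alpha} this forces $\alpha_{\gamma}(x_{i+1})\setminus\alpha_{\gamma}(x_{i})$ to be a singleton whose unique element, by definition of $\lambda_{\gamma}$, is $\lambda_{\gamma}(x_{i},x_{i+1})$. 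Telescoping, the labels along the chain exhaust $\alpha_{\gamma}(v)\setminus\alpha_{\gamma}(u)$, and by Lemma~\ref{lem:i_0}(ii) they are pairwise distinct; hence the label sequence of any maximal chain in $[u,v]_{\gamma}$ is a permutation of the fixed set $\alpha_{\gamma}(v)\setminus\alpha_{\gamma}(u)$.

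Next I argue that a maximal chain in $[u,v]_{\gamma}$ is determined by its label sequence. Given labels $(i_{1},\ldots,i_{m})$ and starting point $x_{0}=u$, the previous paragraph forces $\alpha_{\gamma}(x_{j})=\alpha_{\gamma}(u)\cup\{i_{1},\ldots,i_{j}\}$ at each step. Moreover, the map $x\mapsto\alpha_{\gamma}(x)$ is injective, since the filled positions of $\gamma^{\infty}$ reconstruct the $\gamma$-sorting word of $x$, which in turn determines $x\in W$. Because the labels along any maximal chain are pairwise distinct, a \emph{falling} chain admits only one possible label sequence, namely the elements of $\alpha_{\gamma}(v)\setminus\alpha_{\gamma}(u)$ listed in strictly decreasing order. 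Consequently at most one falling maximal chain exists in $[u,v]_{\gamma}$, and the theorem follows.

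The main point to verify carefully is the injectivity of $\alpha_{\gamma}$, which relies on the encoding emphasized in Remark~\ref{rem:dependence}: once a reduced word for $\gamma$ is fixed, $\alpha_{\gamma}(x)$ determines the $\gamma$-sorting word of $x$ as a specific subword of $\gamma^{\infty}$. Everything else is bookkeeping with the already-established lemmas, and the argument is uniform in $W$ and $\gamma$, with no recourse to finiteness or to the crystallographic hypothesis.
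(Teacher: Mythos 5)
Your reduction to ``at most one falling maximal chain'' via Proposition~\ref{prop:mobius} is the same first step the paper takes. However, the argument you give for that uniqueness contains a fatal error: you assume that a covering relation $x_{i}\lessdot_{\gamma}x_{i+1}$ in $C_{\gamma}$ satisfies $\ell_{S}(x_{i+1})=\ell_{S}(x_{i})+1$, and hence that $\alpha_{\gamma}(x_{i+1})\smallsetminus\alpha_{\gamma}(x_{i})$ is a singleton. This is false: $C_{\gamma}$ is a sub-semilattice of the weak order but is \emph{not} graded by $\ell_{S}$, and a cover in $C_{\gamma}$ may skip over non-sortable elements. Concretely, in the $A_{3}$-Cambrian lattice of Figure~\ref{fig:camb_a3} we have $s_{2}\lessdot_{\gamma}s_{1}s_{2}\vert s_{1}$ even though $\ell_{S}(s_{1}s_{2}s_{1})-\ell_{S}(s_{2})=2$; the set $\alpha_{\gamma}(s_{1}s_{2}\vert s_{1})\smallsetminus\alpha_{\gamma}(s_{2})=\{1,4\}$ has two elements while the label is the single number $1$. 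The interval $[\varepsilon,s_{1}s_{2}\vert s_{1}]_{\gamma}$ then has two maximal chains of different lengths, with label sequences $(1,2,4)$ and $(2,1)$, so your assertion that every maximal chain's labels exhaust $\alpha_{\gamma}(v)\smallsetminus\alpha_{\gamma}(u)$ fails.

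Because different maximal chains may traverse different label \emph{sets}, you cannot conclude that a falling chain has a forced label sequence (the elements of $\alpha_{\gamma}(v)\smallsetminus\alpha_{\gamma}(u)$ in decreasing order), and the determinacy of a chain by its label sequence is not enough: two falling chains could in principle have distinct proper-subset label sequences, and nothing in your argument excludes this. The observations that do survive are Lemma~\ref{lem:alpha} and Lemma~\ref{lem:i_0}(ii), i.e.\ that labels along any single maximal chain are distinct and that $\alpha_{\gamma}$ is nondecreasing along the chain; but these alone do not yield uniqueness of the falling chain. The paper instead proves uniqueness of the falling chain by an induction on $\ell_{S}(v)$ and on the rank of $W$ that mirrors the recursive structure used for the EL-labeling (Theorem~\ref{thm:el_labeling}): in the branch where $s\not\leq_{\gamma}u$ and $s\leq_{\gamma}v$, Lemma~\ref{lem:i_0}(i) forces the last label of any falling chain to be $1$, so one descends to $[u,v_{1}]_{\gamma}$ with $v_{1}\lessdot_{\gamma}v$ and $\lambda_{\gamma}(v_{1},v)=1$ and applies the induction hypothesis there. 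You would need something of this recursive flavour rather than the ``same length, same label set'' shortcut.
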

\begin{proof}
	In view of Proposition~\ref{prop:mobius} it is enough to show that the interval $[u,v]_{\gamma}$ 
	has at most one maximal falling chain. We use similar arguments as in 
	the proof of Theorem~\ref{thm:el_labeling} and proceed by induction on length and rank. Again, 
	we may assume that $\ell_{S}(v)=k\geq 3$ and that $W$ is a Coxeter group of rank $n\geq 2$, since 
	the result is trivial otherwise. Suppose that the induction hypothesis is true for all parabolic 
	subgroups of $W$ with rank $<n$ and suppose that for every closed interval $[u',v']_{\gamma}$ of 
	$C_{\gamma}$ with $\ell_S(v')<k$, there exists at most one falling maximal chain. We will show that
	there is at most one maximal falling chain in the interval $[u,v]_{\gamma}$ as well. For $s$ initial 
	in $\gamma$, we distinguish two cases: (1) $s\not\leq_{\gamma} v$ and (2) $s\leq_{\gamma} v$. 

	(1) The result follows directly by induction on the rank of $W$ by following the steps 
	of case (1) in the proof of Theorem~\ref{thm:el_labeling}.

	(2a) Suppose in addition that $s\leq_{\gamma} u$. The result follows directly by induction on the 
	length of $v$ by following the steps of case (2a) in the proof of Theorem~\ref{thm:el_labeling}.
	
	(2b) Suppose now that $s\not\leq_{\gamma} u$.  It follows from Lemma~\ref{lem:i_0} that a maximal 
	chain $u=x_0\lessdot_{\gamma}x_1\lessdot_{\gamma}\cdots\lessdot_{\gamma}x_{t-1}\lessdot x_t=v$ of 
	$[u,v]_{\gamma}$ can be falling only if $\lambda_{\gamma}(x_{t-1},v)=1$. Hence, if there is no 
	element $v_1\in (u,v)_{\gamma}$, with $v_1\lessdot v$ satisfying $\lambda_{\gamma}(v_1,v)=1$, then 
	the interval $[u,v]_{\gamma}$ has no maximal falling chain, which means that $\mu(u,v)=0$. Otherwise, 
	consider the interval $[u,v_1]_{\gamma}$. By the choice of $v_1$, it follows that every maximal 
	falling chain in $[u,v_{1}]_{\gamma}$ can be extended to a maximal falling chain in the interval 
	$[u,v]_{\gamma}$. Conversely, every maximal falling chain in $[u,v]_{\gamma}$ can be restricted to a 
	maximal falling chain in $[u,v_{1}]_{\gamma}$. Therefore, since $\ell_S(v_1)<\ell_S(v)$, we deduce
	from the induction hypothesis that the interval $[u,v_1]_{\gamma}$ has at most one maximal falling 
	chain. Thus $|\mu(u,v)|\leq 1$. 
\end{proof}

Again in view of Lemma~\ref{lem:well_defined} the statement of Theorem~\ref{thm:cambrian_mobius} does
not depend on a reduced word for $\gamma$, even though our labeling does.

In addition Propositions~5.6 and 5.7 in \cite{reading05lattice} characterize the open 
intervals in a (finite) Cambrian lattice which are contractible, and those which are spherical in the 
following way: an interval $[u,v]_{\gamma}$ in $C_{\gamma}$ is called \emph{nuclear} if the 
join of the upper covers of $u$ is precisely $v$. Nathan Reading showed that the nuclear intervals are 
precisely the spherical intervals. With the help of our labeling, we can generalize this characterization
to infinite Coxeter groups. 

\begin{theorem}
  \label{thm:mobius_nuclear}
	Let $u,v\in C_{\gamma}$ with $u\leq_{\gamma}v$ and let $k$ denote the number of atoms of the interval 
	$[u,v]_{\gamma}$. Then, $\mu(u,v)=(-1)^{k}$ if and only if $[u,v]_{\gamma}$ is nuclear.
\end{theorem}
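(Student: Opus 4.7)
I would prove this by induction on the rank of $W$ and on $\ell_S(v)$, following the case split used in the proof of Theorem~\ref{thm:cambrian_mobius}. Let $s$ be initial in $\gamma$ and distinguish the cases (1) $s\not\leq_{\gamma} v$, (2a) $s\leq_{\gamma} u$, and (2b) $s\not\leq_{\gamma} u$ but $s\leq_{\gamma} v$. In case (1) the interval $[u,v]_{\gamma}$ coincides with $[u,v]_{s\gamma}$ sitting inside the parabolic subgroup $W_{\langle s\rangle}$, while in case (2a) the map $w\mapsto sw$ yields an isomorphism $[u,v]_{\gamma}\cong[su,sv]_{s\gamma s}$ with $\ell_S(sv)<\ell_S(v)$. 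Since both isomorphisms preserve the number of atoms, the nuclearity of the interval, and the value of $\mu$, the inductive hypothesis closes these two cases.

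The substance of the proof lies in case (2b). Lemma~\ref{lem:joincov} shows that $u_1=s\vee_{\gamma}u$ is an atom of $[u,v]_{\gamma}$, and it is the unique atom containing $s$: any other atom $u'$ with $s\leq_{\gamma}u'$ would be an upper bound of $\{s,u\}$ and therefore equal $s\vee_{\gamma}u=u_1$. The remaining atoms $u_2,\dots,u_k$ thus all lie in the order ideal $C_{s\gamma}\subseteq C_{\gamma}$. I would then introduce $v_0=v_{\langle s\rangle}$, which is $\gamma$-sortable and hence belongs to $C_{s\gamma}$, and use the maximal property of the parabolic restriction to deduce $u,u_2,\dots,u_k\leq_{\gamma}v_0$. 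A second application of Lemma~\ref{lem:joincov}, this time to the pair $v_0\leq_{\gamma}v$, shows that $s\vee_{\gamma}v_0$ covers $v_0$ in $C_{\gamma}$. From this one quickly sees that the cover $v_1\lessdot_{\gamma}v$ with label $1$ exists if and only if $s\vee_{\gamma}v_0=v$, in which case $v_1=v_0$.

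Two ingredients then drive the induction. Extending and restricting along the edge $v_1\lessdot_{\gamma}v$ provides a parity-flipping bijection between maximal falling chains in $[u,v_1]_{\gamma}$ and those in $[u,v]_{\gamma}$ (the label $1$ cannot appear anywhere inside $[u,v_1]_{\gamma}\subseteq C_{s\gamma}$), yielding
\[
\mu(u,v)=-\mu(u,v_1)\quad\text{if }v_1\text{ exists,}\qquad\mu(u,v)=0\quad\text{otherwise.}
\]
The key equivalence I would then prove is
\[
[u,v]_{\gamma}\text{ is nuclear}\;\Longleftrightarrow\;v_1\text{ exists and }[u,v_1]_{\gamma}\text{ is nuclear,}
\]
together with the atom count $\lvert\mathrm{atoms}([u,v_1]_{\gamma})\rvert=k-1$: the atoms of $[u,v_1]_{\gamma}$ are precisely $u_2,\dots,u_k$, because any cover of $u$ below $v_1$ cannot contain $s$ and any cover of $u$ not containing $s$ that is below $v$ is automatically below $v_0=v_1$. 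The backward implication is routine: if $u_2\vee_{\gamma}\cdots\vee_{\gamma}u_k=v_1$, then the join of all atoms of $[u,v]_{\gamma}$ equals $u_1\vee_{\gamma}v_1=s\vee_{\gamma}v_1=v$ (using $u\leq_{\gamma}v_1$). The forward implication, which is the main obstacle, says that if $[u,v]_{\gamma}$ is nuclear then $u'=u_2\vee_{\gamma}\cdots\vee_{\gamma}u_k$ must equal $v_0$; nuclearity and $u\leq_{\gamma}u'$ force $s\vee_{\gamma}u'=v$, and Lemma~\ref{lem:joincov} applied to $u'\in C_{s\gamma}$ and $v$ then tells us that $v$ covers $u'$, so no element can lie strictly between $u'$ and $v$, forcing $u'=v_0$. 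Combining the two ingredients with the inductive hypothesis $\mu(u,v_1)=(-1)^{k-1}$ whenever $[u,v_1]_{\gamma}$ is nuclear yields $\mu(u,v)=-\mu(u,v_1)=(-1)^k$ exactly when $[u,v]_{\gamma}$ is nuclear, completing case (2b).
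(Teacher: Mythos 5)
Your proof is correct and follows essentially the same strategy as the paper: induction on rank and length with the case split on $s$, and case (2b) carries the weight, with $u_{1}=s\vee_{\gamma}u$ as the unique atom containing $s$, the remaining atoms sitting in $W_{\langle s\rangle}$, and a descent across the label-$1$ cover of $v$. The two presentations differ in organization. The paper factors out a purely order-theoretic equivalence (Lemma~\ref{lem:nuclear_sub}), constructing the critical element as $v'=\bigvee A_{z}$ and noting its uniqueness only in a follow-up remark, and then argues via Proposition~\ref{prop:mobius} about the existence of a falling chain. You inline that equivalence, identify the candidate as $v_{0}=v_{\langle s\rangle}$ from the outset (and show that $\bigvee A_{z}$ must equal $v_{0}$ when $[u,v]_{\gamma}$ is nuclear, via a second application of Lemma~\ref{lem:joincov}), and compute $\mu$ directly through $\mu(u,v)=-\mu(u,v_{1})$ (or $\mu(u,v)=0$ when no label-$1$ cover of $v$ exists) combined with the count that $[u,v_{1}]_{\gamma}$ has exactly $k-1$ atoms. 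This explicit sign-and-count bookkeeping is a modest gain in transparency: the paper's stated reduction to ``falling chain iff nuclear'' delivers $\mu(u,v)\neq0$ iff nuclear immediately, while the exact value $(-1)^{k}$ also needs the parity of the falling-chain length to match that of $k$, something the paper leaves implicit in the inductive construction. The underlying machinery — repeated use of Lemma~\ref{lem:joincov}, the parabolic restriction $v\mapsto v_{\langle s\rangle}$, and Reading--Speyer's result that joins of elements of $W_{\langle s\rangle}$ stay in $W_{\langle s\rangle}$ — is the same in both.
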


For the proof of Theorem~\ref{thm:mobius_nuclear}, we need the following lemma.

\begin{lemma}
  \label{lem:nuclear_sub}
	Let $u,v\in C_{\gamma}$ with $u\leq_{\gamma}v$, and let $s$ be initial in $\gamma$. 
	Suppose further that $s\not\leq_{\gamma}u$, while $s\leq_{\gamma}v$. 
	Then the following are equivalent:
	\begin{enumerate}
	 \item The interval $[u,v]_{\gamma}$ is nuclear.
	 \item There exists an element $v'\in [u,v]_{\gamma}$ satisfying 
	 $s\not\leq_{\gamma}v'\lessdot_{\gamma}v$, and the interval $[u,v']_{\gamma}$ is nuclear.
	\end{enumerate}
\end{lemma}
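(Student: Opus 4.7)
The plan is to handle the two implications separately, using the atom $u_{1}=s\vee_{\gamma}u$ (which covers $u$ in $C_{\gamma}$ by Lemma~\ref{lem:joincov}) as a pivot. Note that $u_{1}$ is the \emph{unique} atom of $[u,v]_{\gamma}$ containing $s$: any atom $x$ with $s\leq_{\gamma}x$ would satisfy $u_{1}=s\vee_{\gamma}u\leq_{\gamma}x$, forcing $x=u_{1}$. Write $A'$ for the set of the remaining atoms of $[u,v]_{\gamma}$, that is, those not containing $s$.

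For the easier direction $(2)\Rightarrow(1)$, I would argue that the atoms of $[u,v']_{\gamma}$ are exactly the atoms of $[u,v]_{\gamma}$ lying below $v'$; since $s\not\leq_{\gamma}v'$ automatically rules out $u_{1}$, these coincide with $A'$, and nuclearity of $[u,v']_{\gamma}$ reads $v'=\bigvee_{\gamma}A'$. Because $u_{1}\not\leq_{\gamma}v'$, the join $u_{1}\vee_{\gamma}v'$ strictly exceeds $v'$, and the covering $v'\lessdot_{\gamma}v$ forces $u_{1}\vee_{\gamma}v'=v$. This exhibits $v$ as the join of $\{u_{1}\}\cup A'$, so $[u,v]_{\gamma}$ is nuclear.

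For $(1)\Rightarrow(2)$, my candidate is $v':=\bigvee_{\gamma}A'$ (interpreted as $u$ if $A'=\emptyset$, i.e.\ in the degenerate case $v=u_{1}$). The main obstacle is verifying that $s\not\leq_{\gamma}v'$: a priori one might worry that joining $s$-free elements in a quotient semilattice could reintroduce $s$. I would rule this out by passing through the parabolic restriction $v_{\langle s\rangle}$. By Proposition~\ref{prop:recursion}, each $a\in A'$ lies in $W_{\langle s\rangle}$, and the inversion set characterization of weak order (\cite{bjorner05combinatorics}*{Proposition~3.1.3}) gives $a\leq_{S}v_{\langle s\rangle}$. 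Since $v_{\langle s\rangle}$ is itself $\gamma$-sortable by \cite{reading11sortable}*{Propositions~3.13~and~6.10}, this inequality transfers to $a\leq_{\gamma}v_{\langle s\rangle}$, whence $v'\leq_{\gamma}v_{\langle s\rangle}$ and in particular $s\not\leq_{\gamma}v'$.

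Once this is in hand, Lemma~\ref{lem:joincov} applied to the pair $v'\leq_{\gamma}v$ shows that $s\vee_{\gamma}v'$ covers $v'$; combined with $u_{1}\leq_{\gamma}s\vee_{\gamma}v'$ and the nuclearity relation $v=u_{1}\vee_{\gamma}v'$, we get $s\vee_{\gamma}v'=v$ and therefore $v'\lessdot_{\gamma}v$. Nuclearity of $[u,v']_{\gamma}$ is then immediate: its atoms are exactly $A'$ (the inequality $s\not\leq_{\gamma}v'$ excludes $u_{1}$ and no other atoms of $[u,v]_{\gamma}$ sit below $v'$), and their join equals $v'$ by construction. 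The remaining work is bookkeeping around the edge case $A'=\emptyset$ and checking that the parabolic transfer $a\leq_{S}v_{\langle s\rangle}\Rightarrow a\leq_{\gamma}v_{\langle s\rangle}$ is legitimate, both of which are routine given the cited results.
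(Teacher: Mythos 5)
Your proposal is correct and follows essentially the same route as the paper's proof: same pivot atom $z=s\vee_{\gamma}u$ (your $u_{1}$), same candidate $v'=\bigvee A'$ in (1)$\Rightarrow$(2), same use of Lemma~\ref{lem:joincov} to get $v'\lessdot_{\gamma}v$, and the same associativity computation $v=z\vee_{\gamma}v'=s\vee_{\gamma}v'$. Two small remarks. First, for the key point $s\not\leq_{\gamma}\bigvee A'$ the paper simply invokes \cite{reading11sortable}*{Proposition~2.20} (joins of elements of $W_{\langle s\rangle}$ remain in $W_{\langle s\rangle}$), whereas you reach the same conclusion by bounding each $a\in A'$ above by $v_{\langle s\rangle}$ via inversion sets and then using that $v_{\langle s\rangle}$ is $\gamma$-sortable; both are valid, yours is a touch longer but more self-contained. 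Second, in direction (2)$\Rightarrow$(1) you assert that the atoms of $[u,v']_{\gamma}$ coincide with all of $A'$; the argument you give only shows containment in $A'$, since a priori some atoms not containing $s$ might fail to lie below the given $v'$. This is harmless: nuclearity of $[u,v']_{\gamma}$ gives $v'$ as a join of \emph{some} subset $B\subseteq A'$, and then $v=u_{1}\vee_{\gamma}v'=\bigvee(\{u_{1}\}\cup B)\leq_{\gamma}\bigvee A\leq_{\gamma}v$ still forces $\bigvee A=v$. The paper sidesteps this by only ever claiming that the atoms of $[u,v']_{\gamma}$ form a subset of $A_{z}$.
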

\begin{proof}
	Let $A=\{w\in C_{\gamma}\mid u\lessdot_{\gamma}w\leq_{\gamma}v\}$ be the set of
	atoms of the interval $[u,v]_{\gamma}$. 
	Since $s\leq_{\gamma}v$ and $u\leq_{\gamma}v$, we conclude from Theorem~\ref{thm:joins_meets} that 
	the join $s\vee_{\gamma}u$ exists, and we set $z=s\vee_{\gamma}u$. It follows from 
	Lemma~\ref{lem:joincov} that $u\lessdot_{\gamma}z$, and hence $z\in A$.
	We set $A_z=A\setminus\{z\}$ and remark that if $w\in A_z$, then $s\not\leq_{\gamma} w$. 
	Indeed, suppose that there exists some $z'\in A_z$ with $s\leq_{\gamma}z'$. Since $u\lessdot_{\gamma}z'$, 
	this implies $s\vee_{\gamma}u\leq_{\gamma}z'$, and hence $z\leq_{\gamma} z'$. Since $z$ and $z'$ 
	both cover $u$, this implies $z=z'$, which contradicts $z\notin A_z$. Thus, $s\not\leq_{\gamma} w$ 
	for all $w\in A_z$. In particular we have $A_z\subseteq W_{\langle s\rangle}$.
	
	(1)$\Rightarrow$(2) Suppose that $[u,v]_{\gamma}$ is nuclear and let $v'=\bigvee A_z$. 
	Again, Theorem~\ref{thm:joins_meets} ensures that $v'$ exists 
	and that it satisfies $u\leq_{\gamma}v'\leq_{\gamma}v$. 
	Since $A_z\subseteq W_{\langle s\rangle}$, it follows from 
	\cite{reading11sortable}*{Proposition~2.20} that $v'=\bigvee A_{z}\in W_{\langle s\rangle}$
	which means that $s\not\leq_{\gamma} v'$, and $A_z$ is thus the set of atoms 
	of the interval $[u,v']_{\gamma}$. Hence, $[u,v']_{\gamma}$ is nuclear.
	It remains to show that $v'\lessdot_{\gamma}v$. It follows from $u\leq_{\gamma}v'$ and the 
	associativity of $\vee_{\gamma}$ that
	\begin{displaymath}
		v=\bigvee A=z\vee_{\gamma}\left(\bigvee A_{z}\right)=z\vee_{\gamma}v'
		  =(s\vee_{\gamma}u)\vee_{\gamma}v'=s\vee_{\gamma}(u\vee_{\gamma}v')=s\vee_{\gamma}v'.
	\end{displaymath}
	From above, we know that $s¸\not\leq_{\gamma}v'$ and we can apply Lemma~\ref{lem:joincov} which 
	implies immediately that $v'\lessdot_{\gamma}s\vee_{\gamma}v'=v$. 

	(2)$\Rightarrow$(1) Suppose now that there exists an element $v'\in [u,v]_{\gamma}$ satisfying 
	$s\not\leq_{\gamma}v'\lessdot_{\gamma}v$, and suppose that the interval $[u,v']_{\gamma}$ is nuclear. 
	Let $A'$ denote the set of atoms of $[u,v']_{\gamma}$. Since $s\not\leq_{\gamma}v'$ and $s\leq_{\gamma} z$, 
	it follows that $z\notin A'$, thus 
	$A'\subseteq A_z$. Furthermore, from $s\leq_{\gamma}v$,  
	$v'\lessdot_{\gamma}v$ and Lemma \ref{lem:joincov} we deduce that $s\vee_{\gamma}v'=v$. Now we have 
	\begin{displaymath}
		z\vee_{\gamma}v'=(s\vee_{\gamma}u)\vee_{\gamma}v'
		  =s\vee_{\gamma}(u\vee_{\gamma}v')=s\vee_{\gamma}v'=v,
	\end{displaymath}
	since $u\leq_{\gamma}v'$. Thus, we can write $v=\bigvee\bigl(A'\cup\{z\}\bigr)$. 
	Finally, we will show that $v=\bigvee A$.  
	Let $z'\in A\setminus A'$. Since $z'\leq_{\gamma}v$, it follows that 
	\begin{displaymath}
		\bigvee\bigl(A'\cup\{z,z'\}\bigr)=\bigvee\bigl(A'\cup\{z\}\bigr)\vee_{\gamma}z'
		  =(v'\vee_{\gamma}z)\vee_{\gamma}z'=v\vee_{\gamma}z'=v,
	\end{displaymath}
	and hence $v=\bigvee A$. This implies that $[u,v]_{\gamma}$ is nuclear. 
\end{proof}

We remark that under the hypothesis of Lemma~\ref{lem:nuclear_sub}, the element $v'=\bigvee A_z$ constructed in 
the part (1)$\Rightarrow$(2) of the proof is the unique element in $[u,v]_{\gamma}$ satisfying condition (2). 
The uniqueness of $v'$ is a consequence of the uniqueness of the join $\bigvee A_z$.

\begin{proof}[Proof of Theorem~\ref{thm:mobius_nuclear}]
	In view of Proposition~\ref{prop:mobius}, we need to show that $[u,v]_{\gamma}$ has a falling chain 
	if and only if $[u,v]_{\gamma}$ is nuclear. We use similar arguments as in the proof of 
	Theorem~\ref{thm:el_labeling} and proceed by induction on length and rank. Again we may assume that
	$\ell_{S}(v)=k\geq 3$ and that $W$ is a Coxeter group of rank $n\geq 2$, since the result is trivial
	otherwise. Suppose that the induction hypothesis is true for all parabolic subgroups of $W$ with 
	rank $<n$ and suppose that for every closed interval $[u',v']_{\gamma}$ of $C_{\gamma}$ with 
	$\ell_{S}(v')<k$ there exists a falling maximal chain if and only if $[u',v']_{\gamma}$ is nuclear.
	For $s$ initial in $\gamma$, we distinguish two cases: (1) $s\not\leq_{\gamma}v$ and
	(2) $s\leq_{\gamma}v$.
	
	(1) The result follows directly by induction on the rank of $W$ by following the steps of case 
	(1) in the proof of Theorem~\ref{thm:cambrian_mobius}. 

	(2a) Suppose in addition that $s\leq_{\gamma}u$. The result follows directly by induction on the 
	length of $v$ by following the steps of case (2a) in the proof of Theorem~\ref{thm:cambrian_mobius}.
	
	(2b) Suppose now that $s\not\leq_{\gamma}u$. If $[u,v]_{\gamma}$ is nuclear, then
	Lemma~\ref{lem:nuclear_sub} implies that there exists a unique element $v'\in C_{\gamma}$ with
	$u\leq_{\gamma}v'\lessdot_{\gamma}v$ such that $[u,v']_{\gamma}$ is nuclear, and 
	$s\not\leq_{\gamma}v'$. Thus, we can apply induction on the rank of $W$ and obtain a maximal falling 
	chain $c':u=x_{0}\lessdot_{\gamma}x_{1}\lessdot_{\gamma}\cdots\lessdot_{\gamma}x_{t-1}=v'$. 
	Lemma~\ref{lem:i_0} implies that $1\notin\lambda_{\gamma}(c')$, and 
	Lemma~\ref{lem:labeling_recursion} implies that 
	$\lambda_{\gamma}(v',v)=1$. Thus, the chain $c:u=x_{0}\lessdot_{\gamma}x_{1}\lessdot_{\gamma}\cdots
	\lessdot_{\gamma}x_{t-1}\lessdot_{\gamma}x_{t}=v$ is a falling maximal chain in $[u,v]_{\gamma}$, 
	and Theorem~\ref{thm:cambrian_mobius} implies its uniqueness.

	Conversely, suppose that there exists a maximal falling chain 
	$c:u=x_{0}\lessdot_{\gamma}x_{1}\lessdot_{\gamma}\cdots\lessdot_{\gamma}x_{t}=v$ in $[u,v]_{\gamma}$,
	and let $A=\{w\in C_{\gamma}\mid u\lessdot_{\gamma}w\;\mbox{and}\;w\leq_{\gamma}v\}$ denote the set
	of atoms of $[u,v]_{\gamma}$. In view of Lemma~\ref{lem:i_0}, we notice that 
	$\lambda_{\gamma}(x_{t-1},v)=1$, which implies $s\not\leq_{\gamma}x_{t-1}$. Clearly  
	$\ell_{S}(x_{t-1})<k$ and the chain 
	$c':u=x_{0}\lessdot_{\gamma}x_{1}\lessdot_{\gamma}\cdots\lessdot_{\gamma}x_{t-1}$ is falling, 
	thus by induction we can conclude that the interval $[u,x_{t-1}]_{\gamma}$ is nuclear. 
	Since $s\not\leq_{\gamma}x_{t-1}\lessdot_{\gamma} v$, it follows from Lemma \ref{lem:nuclear_sub} 
	that $[u,v]_{\gamma}$ is nuclear. This completes the proof of the theorem. 
\end{proof}

\begin{proof}[Proof of Theorem~\ref{thm:cambrian_topology}]
	Theorem~\ref{thm:infinite_shellability} implies that every closed interval $[u,v]_{\gamma}$ of 
	$C_{\gamma}$ is EL-shellable. Theorem~5.9 in \cite{bjorner96shellable} states that the dimension 
	of the $i$-th homology group of the order complex of $(u,v)_{\gamma}$ corresponds to the number of 
	falling chains in $[u,v]_{\gamma}$ having length $i+2$. Theorem~\ref{thm:cambrian_mobius} implies 
	that there is at most one falling chain in $[u,v]_{\gamma}$. Hence, either all homology groups of 
	the order complex of $(u,v)_{\gamma}$ have dimension $0$ (then, $(u,v)_{\gamma}$ is contractible) or 
	there exists exactly one homology group of dimension $1$ (then, $(u,v)_{\gamma}$ is spherical). 
	Finally, the characterization of the spherical intervals  	
	is an immediate consequence of Theorem \ref{thm:cambrian_mobius}. 
\end{proof}

\begin{remark}
	Christian Stump (private conversation) pointed out that, in the case of finite Coxeter groups, the 
	statements of Theorems~\ref{thm:infinite_shellability} and \ref{thm:cambrian_topology} can be 
	generalized straightforward to the increasing flip order of subword complexes for so-called 
	\emph{realizing words}. In \cite{pilaud12brick}*{Section~5.3}, Pilaud and Stump defined an acyclic, 
	directed, edge-labeled graph on the facets of the subword complex, the so-called 
	\emph{increasing flip graph}. The transitive closure of this graph is then a partial order, the 
	\emph{increasing flip order}. In the case of realizing words, the Hasse diagram of the increasing 
	flip order coincides with the increasing flip graph which then yields an edge-labeling of this poset. 
	One can show that this labeling is indeed an EL-labeling and that every interval has at most one 
	falling chain. This has recently been done in \cite{pilaud12el}.  
	
	It is the statement of \cite{pilaud12brick}*{Corollary~6.31} that the Cambrian lattices of finite
	Coxeter groups correspond to the increasing flip order of special subword complexes. In addition, 
	the construction of \cite{pilaud12brick} as briefly described in the previous paragraph provides a 
	nice geometric description of the statements of Theorems~\ref{thm:infinite_shellability} and
	\ref{thm:cambrian_topology}.
\end{remark}

We conclude this section with a short example of an infinite Coxeter group.

\begin{example}
	Consider the affine Coxeter group $\tilde{A}_{2}$, which is generated by the set
	$\{s_{1},s_{2},s_{3}\}$ satisfying $(s_{1}s_{2})^{3}=(s_{1}s_{3})^{3}=(s_{2}s_{3})^{3}=\varepsilon$,
	as well as $s_{1}^{2}=s_{2}^{2}=s_{3}^{2}=\varepsilon$. Consider the Coxeter element 
	$\gamma=s_{1}s_{2}s_{3}$. Figure~\ref{fig:camb_affine_a2} shows the sub-semilattice of the 
	Cambrian semilattice $C_{\gamma}$ consisting of all $\gamma$-sortable elements of $\tilde{A}_{2}$ of 
	length $\leq 7$. We encourage the reader to verify Theorem~\ref{thm:el_labeling} and 
	Theorem~\ref{thm:cambrian_mobius}. 
	
	\begin{figure}
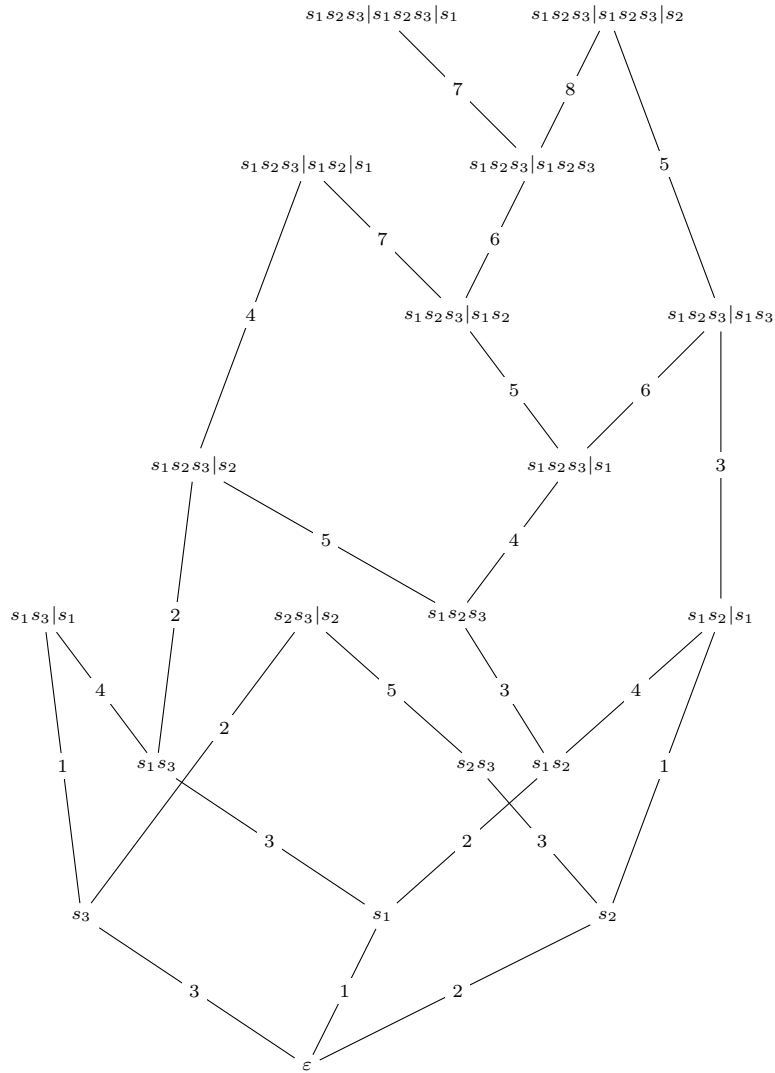

		\centering
		\cambAffineATwo{1}{2}
		\caption{The first seven ranks of an $\tilde{A}_{2}$-Cambrian semilattice, with the labeling
		  as defined in \eqref{eq:labeling}.}
		\label{fig:camb_affine_a2}
	\end{figure}
\end{example}

\section*{Acknowledgements}
The authors would like to thank Nathan Reading for pointing out a weakness in a previous version of the proof
of Theorem~\ref{thm:el_labeling} and for many helpful discussions on the topic.

\bibliography{literature}

\end{document}